\newtheorem{theorem}{Theorem}[section]
\newtheorem{lemma}[theorem]{Lemma}
\newtheorem{proposition}[theorem]{Proposition}
\newtheorem{corollary}[theorem]{Corollary}
\begin{document}
\begin{frontmatter}

\title{Stochastic model for cell polarity}
\runtitle{Stochastic model for cell polarity}

\begin{aug}
\author[A]{\fnms{Ankit} \snm{Gupta}\thanksref{t1}\corref{}\ead[label=e1]{gupta@math.wisc.edu}}
\runauthor{A. Gupta}
\affiliation{University of Wisconsin, Madison}
\address[A]{Department of Mathematics\\
480 Lincoln Drive\\
Madison, Wisconsin 53706\\
USA\\
\printead{e1}} 
\end{aug}

\thankstext{t1}{Supported in part by NSF Grants DMS-05-53687 and DMS-08-05793.}

\received{\smonth{7} \syear{2010}}
\revised{\smonth{5} \syear{2011}}

%
\begin{abstract}
Cell polarity refers to the spatial asymmetry of molecules on the cell
membrane. Altschuler, Angenent, Wang and Wu have proposed a stochastic
model for studying the emergence of polarity in the presence of
feedback between molecules. We analyze their model further by
representing it as a model of an evolving population with interacting
individuals. Under a suitable scaling of parameters, we show that in
the infinite population limit we get a Fleming--Viot process. Using
well-known results for such processes, we establish that cell polarity
is exhibited by the model and also study its dependence on the
biological parameters of the model.
\end{abstract}

%
\begin{keyword}[class=AMS]
\kwd{60G57}
\kwd{60J68}
\kwd{92C37}
\kwd{92C42}.
\end{keyword}
\begin{keyword}
\kwd{Fleming--Viot}
\kwd{cell polarity}
\kwd{spatial clustering}
\kwd{Donnelly--Kurtz}
\kwd{particle representation}.
\end{keyword}

\end{frontmatter}

\section{Introduction}\label{sec1}
The phenomenon of polarity is ubiquitous in living organisms. It is
known to occur at many levels: from cellular to organismic. Polarity is
what causes one part of a biological system to be different from
another. Understanding how polarity is established and maintained is a
matter of fundamental concern for biologists.

In this paper we are interested in polarity at the level of individual
cells. Consider a spherical cell consisting of the cytosol and the
membrane. Suppose that it contains numerous molecules that may either
reside in the cytosol or on the membrane. The phenomenon of cell
polarity refers to an identifiable form of spatial asymmetry of
molecules on the membrane. Biologists generally consider a cell to be
in a \textit{polarized} state when most of the membrane molecules appear
to be concentrated around a single site or located in a single
hemisphere on the membrane. It is known that many types of cells
exhibit this phenomenon. The most common example is the yeast cell (see
\cite{BioPB1,BioPB2,AAWWref14,AAWWref15}), but there are many others
(see \mbox{\cite{DN,AAWWref6}}). As noted in \cite{DN}, cell polarity is vital
in the creation of functionally specialized regions on the membrane,\vadjust{\goodbreak}
which can then facilitate cellular processes such as localized membrane
growth, activation of immune response, directional cell migration and
vectorial transport of molecules across cell layers.

Due to its importance, many attempts have been made to investigate the
mechanisms responsible for cell polarity. Drubin and Nelson \cite{DN}
mention that the existence of cell polarity involves positive
feedback
from the signaling molecules on the membrane. This feedback enables the
signaling molecules to perform localized recruitment, thereby causing
concentration of molecules in a specified region on the membrane.
Examples of such signaling molecules include Cdc 42 in budding yeast
(see \cite{AAWWref9}), mPar3/mPar6 in neurons (see~\cite{AAWWref10}),
Rac in kidney cells (see \cite{AAWWref11}) and human chemotaxing
neutrophils (see \cite{AAWWref12}). Even though the feedback mechanism
may bring the molecules together, it is unclear if it can generate cell
polarity alone. This is because the molecules on the membrane are
constantly diffusing and, hence, any clusters that form may disappear
quickly with time. Biologists have proposed that additional mechanisms
like directed transport and coupled inhibitors are required to counter
the spatial diffusion and generate spatial asymmetry (see \cite
{AAWWref3,AAWWref14,AAWWref16,AAWWref4,AAWWref8}). However, these
additional mechanisms are not always found in cells that exhibit
polarity. Hence, the question arises whether feedback alone can cause
polarization.

Altschuler, Angenent, Wang and Wu \cite{AAWW} show that indeed feedback
alone can generate cell polarity when the number of molecules is small.
They prove this result via a simple mathematical model derived by
abstracting the feedback circuits found in cells. In their model, the
feedback mechanism is given by the following: a~molecule on the
membrane may pull a molecule from the cytosol to its location on the
membrane. In a stochastic setting they show that their model exhibits
recurring cell polarity. However, the frequency of polarity is
inversely proportional to the number of molecules in the cell. This
suggests that no polarity can persist in the infinite population limit
without any additional mechanisms to reinforce asymmetry.

In this paper we will scale some parameters of the model in \cite{AAWW}
and study the resulting model. The main result of our paper is that if
we let the feedback strength of each membrane bound molecule increase
linearly with the population size, then we do get recurring cell
polarity in the infinite population limit. Hence, under our scaling,
the model suggests that feedback alone can generate cell polarity in
the infinite population limit without any additional mechanisms. Our
approach is to express the dynamics of cell molecules as a
measure-valued Markov process and then prove that in the limit, the
dynamics of molecules on the membrane can be described by
a~Fleming--Viot process. This process was introduced by Fleming and Viot~\cite{FV}
in 1979 and it has been very well studied since then. An
excellent survey of Fleming--Viot processes is given by Ethier and Kurtz
\cite{EK93}. Using the results already known for such processes, we
will first show that the limiting process is ergodic and hence has a
unique stationary distribution. We will then illustrate that at
stationarity the membrane molecules are arranged into \textit{clans} of
various sizes and molecules in a clan are spatially clustered.
Moreover, the distribution of clan sizes and the expected spatial
spread of the clans can be readily computed in terms of the biological
parameters of the model. Our results will allow us to deduce that there
are times when most of the molecules are part of a single clan and lie
in a single hemisphere on the membrane, thereby causing a cell to
polarize. We now describe the model given in \cite{AAWW}.
\begin{descript}
\label{description}
There are $N$ molecules in the cell (cytosol and membrane). The cell
itself is a sphere of radius $R$. The following four events can change
the molecular configuration in the cell:
\begin{itemize}
\item\textit{Spontaneous membrane association}: A molecule in the cytosol
moves to a~random location on the membrane at rate $k_{\mathrm{on}}$.
\item\textit{Spontaneous membrane dissociation}: A molecule on the
membrane moves back into the cytosol at rate $k_{\mathrm{off}}$.

\item\textit{Membrane association through recruitment} (\textit{feedback
mechanism}): A mole\-cule on the membrane recruits another molecule from
the cytosol at rate
$k_{\mathrm{fb}} \times\mbox{(\textit{fraction of molecules in
the cytosol})}$.

At the time of recruitment, the \textit{recruited} particle moves to the
location of the \textit{recruiting} particle.
\item\textit{Membrane diffusion}: Each molecule on the membrane does
Brownian motion with speed $D$.
\end{itemize}
\end{descript}

The parameters of the model $N$, $D$, $R$, $k_{\mathrm{on}}$,
$k_{\mathrm{fb}}$ and $k_{\mathrm{off}}$ have clear biological
interpretations. As mentioned in \cite{AAWW}, $k_{\mathrm{fb}}$ and
$k_{\mathrm{off}}$ are comparable and throughout this paper we will
assume the following:
\begin{assumption}
\label{assmp1}
\[
k_{\mathrm{fb}} > k_{\mathrm{off}} > 0.
\]
\end{assumption}

In this paper we scale up $k_{\mathrm{fb}}$ and $k_{\mathrm{off}}$ by
the population size $N$ and leave $k_{\mathrm{on}}$ the same. We show
that under this scaling the model becomes mathematically tractable as
$N \to\infty$. In Section \ref{sec3} we will discuss the choice of this scaling
and the necessity of Assumption \ref{assmp1}.

Since we will be relating this model to a well-known model in
population genetics, it is convenient to think of cell molecules as
individuals in an evolving population. Consider the membrane molecules
as being \textit{alive} and the cytosol molecules as being \textit{dead}.
Each membrane molecule has two attributes: location and clan indicator.
When a membrane molecule recruits another molecule from the cytosol,
this new molecule gets initially assigned the same location and clan
indicator as the recruiting molecule. The location of this new molecule
will change subsequently, as it does its own Brownian motion but its
clan indicator remains the same. We can think of membrane recruitment
as a \textit{birth} process in which the recruiting membrane molecule
(the \textit{parent}) passes its characteristics to the recruited
molecule (the \textit{offspring}). The membrane molecules that have the
same clan indicator are said to be in the same \textit{clan}, which
implies that they have a common ancestor. When a molecule spontaneously
associates itself to the membrane, we assign it a new clan indicator
and a randomly chosen location on the membrane. Therefore, we can think
of spontaneous association as \textit{immigration} in which the
individuals bring new genetic traits into the population.~When
a~membrane molecule spontaneously dissociates from the membrane and goes
into the cytosol, it loses both its attributes. So we can think of
spontaneous dissociation as \textit{death}. Note that a molecule that
dies can get reincarnated.

At any time, the membrane molecules can be classified into clans based
on their ancestry. Since the molecules in a clan have a common
ancestor, if the diffusion constant $D$ is small, we can expect them to
be clustered on the membrane. However, the molecular diffusion may
cause a clan to spread apart with time. Surprisingly, this does not
happen in our model. We mentioned before that in the infinite
population limit, the cell dynamics is ergodic and reaches a stationary
state at which the spatial spread of the clans does not change with
time. This is due to the extremely fast nature of the birth and death
mechanisms in our model which causes most of the molecules in a clan to
be \textit{newly born}. Hence, they have been unable to move away from
their common ancestor by too much. We will show that in the limit there
are infinitely many clans present in the population at stationarity,
but there are only a few \textit{large} clans. These two results together
imply that spatial asymmetry is present and persistent. We will then
argue that there will be times when most of the population will be part
of one large clan and also appear to concentrate around a single point.
Consequently, the cell is polarized at these times. This shows that
unlike the original model, cell polarity is present in our rescaled
model as the population size goes to infinity. For a detailed study of
the model considered here, we refer the readers to \cite{AnkitThesis}.

This paper is organized as follows. In Section \ref{sec2} we give the main
results of our paper. In Section \ref{sec3} we interpret these results
in the
context of biology and compare our results with the results provided in
\cite{AAWW} for the original model. We also state some interesting
research questions that we were unable to answer in this paper.
Finally, in Section \ref{sec4} we provide the proofs of our results.

\section*{Notation}
We now introduce some notation that will be used throughout the paper.
Let $(S,d)$ be a compact metric space. Then by $B(S) ( C(S)
) $ we refer to the set of all bounded (continuous) real-valued Borel
measurable functions. Since $(S,d)$ is compact, $C(S) \subset B(S)$.
Both $B(S)$ and $C(S)$ are Banach spaces\vadjust{\goodbreak} under the sup norm $\| f
\|= \sup_{x \in S } | f(x) |$. For any differentiable
manifold $M$ and $k \geq1$, let $C^{k}(M)$ be the space of functions
which are $k$-times continuously differentiable. Let $\mathcal{B}(S)$
be the Borel sigma field on $S$. The space of all positive Borel
measures with total measure bounded above by $1$ is denoted by $\mathcal
{M}_1(S)$ and $\mathcal{P}(S)$ is the space of all Borel probability
measures. Since $(S,d)$ is compact, Prohorov's theorem implies that
both~$\mathcal{P}(S)$ and $\mathcal{M}_1(S)$ are compact under the
topology of weak convergence. For any $\mu\in\mathcal{M}_1(S)$ and
$f\dvtx S \to\mathbb{R}$ let
\[
\langle f,\mu\rangle=\int_{S} f(s)\mu(ds).
\]
If $\mu\in\mathcal{M}_1(S)$, then for any positive integer $m$, $\mu
^{m} \in\mathcal{M}_1(S^m)$ refers to the $m$-fold product of $\mu$.
If $\mu$ is an atomic measure of the form $a_n \sum_{i=1}^{n} \delta
_{x_i} $ for some $a_n>0$, then $\mu^{(m)}$ is the \textit{symmetric}
$m$-fold product of $\mu$ defined by
%
%
\begin{equation}
\label{mum}
\mu^{(m)} = \frac{1}{n(n-1)\cdots(n-m+1)} \sum_{1 \leq
i_1 \neq i_2\neq\cdots\neq i_m \leq n }
\delta_{(x_{i_1},x_{i_2},\ldots,
x_{i_m} )},
\end{equation}
where the sum is over all distinct $m$-tuples of $\{1,2,\ldots,n\}$. If
$m>n$, then the sum above is empty and $\mu^{(m)}$ is taken to be $0$.
Observe that $\mu^{(m)}$ does not depend on $a_n$ and for $n>m$ it is a
probability measure over $S^m$. Also note that if $\mu$ is a
probability measure (i.e., $a_n = 1/n$), then for large $n$, $\mu
^{(m)}$ is approximately equal to $\mu^{m}$.

The space of cadlag functions (i.e., right continuous functions with
left limits) from $[0,\infty)$ to $S$ is called $D_{S} [0,\infty)$ and
it is endowed with the Skorohod topology (for details see Chapter 3,
Ethier and Kurtz \cite{EK}). The space of continuous functions from
$[0,\infty)$ to $S$ is called $C_{S} [0,\infty)$ and it is endowed with
the topology of uniform convergence over compact sets.

For any operator $A \subset B(S) \times B(S), $ let $\mathcal{D }(A)$
and $\mathcal{R}(A)$ designate the domain and range of $A$. The notion
of the \textit{martingale problem} associated to an operator $A$ is
introduced and developed in Chapter 4, Ethier and Kurtz~\cite{EK}. In
this paper, by a solution of the martingale problem for $A$, we mean a
measurable stochastic process $X$ with paths in $D_{S} [0,\infty)$ such
that for any $f \in\mathcal{D }(A)$,
\[
f(X(t)) - \int_{0}^{t} A f(X(s))\,ds
\]
is a martingale with respect to the filtration generated by $X$. For a
given initial distribution $\pi\in\mathcal{P}(S)$, a solution $X$ of
the martingale problem for $A$ is a solution of the martingale problem
for $(A,\pi)$ if $\pi= P X(0)^{-1}$. If such a solution exists
uniquely for all $\pi\in\mathcal{P}(S)$, then we say that the
martingale problem for $A$ is well posed.

\section{The main results}\label{sec2}

Our first task in this section is to represent the dynamics of cell
molecules as a measure-valued Markov process. Suppose there are $N$
molecules in the\vadjust{\goodbreak} cell (cytosol and membrane). The cell membrane will be
denoted by $E$ and it is a sphere of radius $R$ in $\mathbb{R}^3$. As
we mentioned before, each membrane molecule has two attributes:
location and clan indicator. The locations are elements in $E$, while
the clan indicators will be chosen as elements in the unit interval
$[0,1]$. Hence, $E \times[0,1]$ is the type space for the molecules. A
molecule of type $x=(y,z) \in E \times[0,1]$ is located at $y$ on the
membrane and has $z$ as its clan indicator. Note that a membrane
molecule will change its type only due to Brownian motion. Therefore,
during its stay on the membrane, only its location (first coordinate)
changes while its clan indicator (second coordinate) remains
fixed.

If there are $N$ molecules in the cell, then we assign mass $1/N$ to
each molecule. The membrane population at time $t$ can be represented
by an atomic measure as follows:
%
%
\begin{equation}
\label{defmun}
\mu^N(t) = \frac{1}{N} \sum_{i=1}^{n^N(t)} \delta_{x_i(t)},
\end{equation}
where $n^N(t) = N \langle1,\mu^N(t)\rangle$ is the number
of molecules on the membrane at time $t$ and $x_1(t),\ldots,
x_{n^N(t)}$ are their types. Viewed as a process, $\mu^N$ is Markov and
its state space is given by
\begin{eqnarray*}
&&\mathcal{M}^N_{a}(E \times[0,1])\\
&&\qquad=\Biggl\{ \frac{1}{N} \sum_{i=1}^n
\delta_{x_i} \dvtx0 \leq n \leq N \mbox{ and } x_1,\ldots,x_n \in
E \times[0,1] \Biggr\}.
\end{eqnarray*}
For any $\mu\in\mathcal{M}^N_{a}(E \times[0,1])$, the total mass
$\langle1,\mu\rangle\leq1$ and, hence, $\mathcal
{M}^N_{a}(E \times[0,1]) \subset\mathcal{M}_1(E \times
[0,1])$. If we
endow $\mathcal{M}^N_{a}(E \times[0,1])$ with the topology of weak
convergence, then it is a compact space.

The generator of any Markov process is an operator which captures the
rate of change of the distribution of the process. For a detailed
discussion on generators, see Chapter 4 in Ethier and Kurtz \cite{EK}.
For a speed $D$ Brownian motion on the membrane $E$, the generator is
given by $\frac{D}{2} \Delta$, where $\Delta$ is the Laplace--Beltrami
operator on $E$. Note that $C^2(E) \subset\mathcal{D}(\Delta)$, where
$C^2(E)$ is the space of twice continuously differentiable functions on
$E$. Next we define the classes of functions that we will use in this paper.
\begin{definition}
\label{defc2}
\begin{eqnarray*}
\mathcal{C} & = & \bigl\{f \in C \bigl(( E \times[0,1] )^{m}
\bigr) \mbox{ such that } f(\cdot,z) \in C^2(E^{m}) \\
&&\hspace*{3pt}\mbox{ for all } z\in[0,1]^{m}\mbox{,
and } \nabla f (x,\cdot), \Delta f(x,\cdot) \in
C([0,1]^m) \\
&&\hspace*{133pt}\mbox{ for all } x\in E^m \mbox{ and
} m \geq1 \bigr\}.
\end{eqnarray*}
\end{definition}

\begin{definition}
\label{defc02}
\[
\bar{\mathcal{C}}=\bigl\{F(\mu)=\bigl\langle f, \mu^{(m)}
\bigr\rangle\mbox{ such that } f \in\mathcal{C} \mbox{ and }
m \geq1 \bigr\}.
\]
\end{definition}

We now specify the generator $\mathbb{A}^N$ for the process $\mu^N$,
which captures the dynamics of the cell population. For any $f \in
\mathcal{C}$ let $\Delta_i f$ denote the action of the Laplace--Beltrami
operator on $f$ by considering it as a function of its $i$th
coordinate. Let the domain of the operator $\mathbb{A}^N$ be $\mathcal
{D}(\mathbb{A}^N)= \bar{\mathcal{C}}$ and for $F \in\bar{\mathcal{C}}$
of the form $\langle f, \mu^{(m)}\rangle$, define
%
%
\begin{eqnarray}
\label{genpm1}\quad
\mathbb{A}^N F(\mu) & = & \frac{D}{2} \sum_{i=1}^{m} \bigl\langle\Delta_i
f,\mu^{(m)} \bigr\rangle\nonumber\\
&&{} + k_{\mathrm{on}}N (1-h) \int_{E} \int
_{[0,1]} \biggl(F\biggl(\mu+ \frac{1}{N} \delta_{(y,z)} \biggr)-F(\mu)
\biggr) \vartheta(dy) \,dz \nonumber\\[-8pt]\\[-8pt]
&&{} + k_{\mathrm{off}} N^2 \int_{E \times[0,1]} \biggl(F\biggl(\mu- \frac
{1}{N} \delta_{x} \biggr)-F(\mu)\biggr)\mu(dx) \nonumber\\
&&{} + k_{\mathrm{fb}} N^2(1-h) \int_{E \times[0,1]} \biggl( F\biggl(\mu+
\frac{1}{N} \delta_{x} \biggr)-F(\mu) \biggr)\mu(dx), \nonumber
\end{eqnarray}
where $h=\langle1,\mu\rangle$ and $\vartheta(\cdot)$ is the surface
area measure on the sphere $E$ normalized to have total area as $1$.
Terms in the operator above correspond to the surface diffusion of the
membrane molecules, spontaneous association, spontaneous dissociation
and membrane recruitment, in that order. The martingale problem for
$\mathbb{A}^N$ is well posed and this can be seen by viewing the
operator $\mathbb{A}^N$ as a bounded perturbation of the diffusion
operator (given by the first term of $\mathbb{A}^N$). It is easy to
argue that the martingale problem for the diffusion operator is well
posed and the solution for any initial distribution is just the
empirical measure process of a system of particles doing independent
speed $D$ Brownian motion on $E$. Proposition 10.2 and Theorem~10.3 in
Chapter 4 of Ethier and Kurtz \cite{EK} imply the well-posedness of the
martingale problem for $\mathbb{A}^N$.

It will soon become evident that the initial distribution of cell
molecules is not important for the discussion in this paper. For
definiteness we will assume that the membrane is initially empty. Let
$\bar{\pi}_0 \in\mathcal{P}(\mathcal{M}^N_{a}(E \times
[0,1]))$ be the distribution that puts all the mass at the $0$
measure. From now on $\mu^N$ will be the unique Markovian solution to
the martingale problem corresponding to $(\mathbb{A}^N,\bar{\pi
}_0)$.

We define another process $h^N$ by
%
%
\begin{equation}
\label{defhn}
h^N(t) = \langle1,\mu^N(t) \rangle= \frac{n^N(t)}{N} ,\qquad
t\geq0.
\end{equation}
At any time $t$, $h^N(t)$ is the fraction (or the total mass) of cell
molecules that are on the membrane. We will refer to $h^N$ as the
\textit
{fraction process}. Observe that $h^N(0) = 0$.

We are interested in showing the convergence of the sequence of
processes~$\{ \mu^N\}$\vadjust{\goodbreak} as $N \to\infty$. Note that the last two terms
in $\mathbb{A}^N$ [see (\ref{genpm1})] do not appear to converge
independently. This is because terms like
\[
\int_{E \times[0,1]} \biggl(F\biggl(\mu\pm\frac{1}{N} \delta_{x}
\biggr)-F(\mu)\biggr)\mu(dx)
\]
will typically be of order $1/N$ and we are multiplying them by $N^2$
outside. However, convergence does happen because these two terms
combine to give a second-order term. Instead of directly dealing with
the sequence of generators $\{\mathbb{A}^N\}$, we will prove the
convergence result in Section \ref{sec4} by using the particle construction
introduced by Donnelly and Kurtz in \cite{DK99}. This construction
provides a more probabilistic way of passing to the limit.

Define an operator $\mathbb{A}$ as follows. For any $F \in C (
\mathcal{M}_1 (E \times[0,1]) )$ of the form $F(\mu)=
\prod_{i=1}^{m} \langle f_i,\mu\rangle$, where $f_i \in
\mathcal{C} \cap C(E \times[0,1] )$ for all $i=1,2,\ldots
,m$ and $m \geq1$, define
%
%
\begin{eqnarray}
\label{gena}
\mathbb{A} F(\mu)& = &\frac{D}{2} \sum_{i=1}^{m} \langle\Delta f_i,\mu
\rangle\prod_{j \neq i} \langle f_j,\mu\rangle\nonumber\\[-3pt]
&&{}+
k_{\mathrm{on}} \frac{(1-h_{\mathrm{eq}})}{h_{\mathrm{eq}}}
\nonumber\\[-9.5pt]\\[-9.5pt]
&&\hspace*{11pt}{}\times\sum
_{i=1}^{m} \int_{E} \int_{[0,1]} \bigl( f_i(y,z) \vartheta(dy) \,dz -
f_i(x) \mu(dx) \bigr) \prod_{j \neq i} \langle f_j ,\mu\rangle
\nonumber\\[-3pt]
&&{}+\frac{k_{\mathrm{fb}}(1-h_{\mathrm{eq}})}{h_{\mathrm{eq}}}
\sum_{1\leq i \neq j \leq m} ( \langle f_i f_j,\mu
\rangle- \langle f_i,\mu\rangle\langle f_j,\mu
\rangle) \prod_{k \neq i,j} \langle f_k,\mu
\rangle. \nonumber
\end{eqnarray}
The operator $\mathbb{A}$ is the generator of a Fleming--Viot process.
The martingale problem corresponding to it is well posed and each
solution has paths in $C_{\mathcal{P}(E \times[0,1])}[0,\infty)$ by
Theorem 3.2, Ethier and Kurtz \cite{EK93}. We are now ready to state
the convergence result. Throughout this paper $\Rightarrow$ will denote
convergence in distribution.
\begin{theorem}
\label{mainconvthm}
There exists a stopping time $\tau_N$ (with respect to filtration
generated by $\mu^N$) satisfying $\tau_N \to0$ a.s. as $N \to\infty$,
such that if we define processes $\hat{h}^N$ and $\hat{\mu}^N$ as
%
%
\begin{equation}
\hat{h}^N (t) = h^N(t+\tau_N),\qquad t \geq0,
\end{equation}
and
%
%
\begin{equation}
\label{hatmu}
\hat{\mu}^N (t) = \mu^N(t+\tau_N),\qquad t \geq0,
\end{equation}
then the following is true.
\begin{longlist}[(A)]
\item[(A)] For any $T>0$,
\[
\sup_{0 \leq t\leq T} |\hat{h}^N(t)-h_{\mathrm{eq}} |
\Rightarrow0 \qquad\mbox{as } N \to\infty,
\]
where $h_{\mathrm{eq}} = 1 - \frac{k_{\mathrm{off}}}{k_{\mathrm{fb}}}$.\vadjust{\goodbreak}
\item[(B)] Suppose that the sequence of random variables $\{\hat{\mu
}^N(0)\}$ converges in distribution to $\mu(0)$ and let $\pi_0 \in
\mathcal{P} ( \mathcal{P}(E \times[0,1]) )$ be the
distribution of $\mu(0)/h_{\mathrm{eq}}$. Then $\hat{\mu}^N \Rightarrow
\mu$ in $D_{\mathcal{M}_1 (E \times[0,1])}[0,\infty)$ as $N \to\infty
$, where $\mu= h_{\mathrm{eq}} \nu$ and $\nu$ is the Fleming--Viot
process with type space $E \times[0,1]$, generator $\mathbb{A}$ and
initial distribution $\pi_0$.\vspace*{3pt}
\end{longlist}
\end{theorem}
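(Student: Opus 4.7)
My plan is to prove Part (A) by a direct analysis of the scalar fraction process $h^N$, and Part (B) via the Donnelly-Kurtz particle construction combined with a martingale problem identification.

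For Part (A), applying $\mathbb{A}^N$ to $F(\mu)=\langle 1,\mu\rangle$ yields the semimartingale decomposition
\[ h^N(t) = \int_0^t \Bigl[ k_{\text{on}}(1-h^N(s)) + N\bigl(k_{\text{fb}}(1-h^N(s)) - k_{\text{off}}\bigr) h^N(s) \Bigr] ds + M^N(t), \]
where $M^N$ is a martingale whose predictable quadratic variation grows at $O(1)$ rate per unit time (each jump contributes $1/N^2$ and jump rates are $O(N^2)$). The drift contains a restoring force of magnitude $O(N)$ toward $h_{\text{eq}}$, which dominates once $h^N$ is bounded away from $0$. I would define $\tau_N$ as the first time $h^N$ enters a suitably shrinking neighborhood of $h_{\text{eq}}$, for instance of radius $N^{-1/4}$. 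An ODE comparison (the deterministic analogue reaches any neighborhood of $h_{\text{eq}}$ in time $O(\log N / N)$), together with Doob-type control of the martingale fluctuations, shows $\tau_N \to 0$ almost surely. For $t \geq \tau_N$, the Lyapunov function $V(h)=(h-h_{\text{eq}})^2$ satisfies $\mathbb{A}^N V(\mu) \leq -cN\, V(\mu) + O(1)$, so Gronwall and maximal inequalities yield the required uniform smallness on $[\tau_N,\tau_N+T]$.

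For Part (B), I would first establish tightness of $\{\hat{\mu}^N\}$ in $D_{\mathcal{M}_1(E \times [0,1])}[0,\infty)$ using the criteria from Chapter 3 of \cite{EK} applied to the bounded operator $\mathbb{A}^N$ on the algebra $\bar{\mathcal{C}}$. To identify the limit, I would follow the author's suggested approach from \cite{DK99}: construct a countable exchangeable particle system in $E \times [0,1]$ with look-down dynamics whose de Finetti measure is the Fleming-Viot process $\nu$ with generator $\mathbb{A}$, and couple it with the finite-$N$ membrane population so that, once $|\hat{h}^N - h_{\text{eq}}|$ is small, the two empirical measures differ by a vanishing amount. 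An equivalent route is to verify directly that every limit point of $\hat{\mu}^N$ solves the martingale problem for $\mathbb{A}$ applied to $\mu/h_{\text{eq}}$: Taylor-expanding the $\pm 1/N$ increments in $\mathbb{A}^N F$ for $F \in \bar{\mathcal{C}}$, the first-order contributions combine to a factor proportional to $N[k_{\text{fb}}(1-h^N) - k_{\text{off}}]$ which vanishes uniformly after $\tau_N$ by Part (A), while the second-order contributions consolidate into the Fleming-Viot resampling term with coefficient $k_{\text{fb}}(1-h_{\text{eq}})/h_{\text{eq}}$ after the rescaling $\mu = h_{\text{eq}} \nu$. The well-posedness of the martingale problem for $\mathbb{A}$ (Theorem 3.2 of \cite{EK93}) then upgrades tightness to convergence in distribution.

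The main obstacle is the handling of the $O(N)$ drift, which formally diverges but is proportional to a quantity that vanishes at $h_{\text{eq}}$. This is why the stopping time $\tau_N$ is introduced: prior to $\tau_N$ the dynamics is dominated by births over deaths (since $h^N$ starts at $0$), and only after synchronization at $h_{\text{eq}}$ do the two $N^2$-scale terms cancel to leading order, exposing the second-order resampling structure that defines the Fleming-Viot limit. Carrying out the Donnelly-Kurtz coupling past this initial fast layer while tracking the particle labels (clan indicators in $[0,1]$), and quantifying the rate $\tau_N\to 0$ well enough to pass an essentially arbitrary initial datum for $\hat{\mu}^N$ through the limit, is the most delicate part of the proof.
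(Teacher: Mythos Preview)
Your overall strategy matches the paper's. A few points of comparison and one genuine gap.

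\textbf{Part (A).} Your Lyapunov/Gronwall scheme is a legitimate alternative, but the paper proceeds differently: it sets $\tau_N=\rho^N+\log N/N$, where $\rho^N$ is the hitting time of $h_{\text{eq}}/2$ (not of a shrinking neighborhood of $h_{\text{eq}}$). To show $\rho^N\to 0$ a.s.\ the paper slows time by $N$ and dominates $n^N$ from below by a supercritical branching process with immigration, invoking the a.s.\ exponential growth of the latter. After $\rho^N$ it linearizes $\alpha^N=\bar h^N-h_{\text{eq}}$, applies the integrating factor $e^{\beta_N(t)}$ with $\beta_N(t)=Nk_{\text{fb}}\int_0^t \bar h^N(s)\,ds$, and uses an averaging lemma (Lemma~5.2 of \cite{kat}) to kill $e^{-\beta_N(t)}\int_0^t e^{\beta_N(s)}\,dZ_N(s)$. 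Your route is arguably more elementary; the paper's is more explicit about the initial boundary layer near $h=0$, where the restoring drift degenerates.

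\textbf{Part (B).} Your primary suggestion, the Donnelly--Kurtz look-down construction, is exactly what the paper does: it builds the level process with generator $A^N$, links it to $\mu^N$ via the Markov Mapping Theorem, and then observes that on functions of the first $m$ levels the death term disappears (only the top level dies) while the birth and immigration rates carry the factor $(N-\hat n^N)/(\hat n^N+1)\to(1-h_{\text{eq}})/h_{\text{eq}}$. This is the key structural point: in the particle picture the rates are $O(1)$ from the outset, so no $O(N)$ cancellation is ever required.

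Your alternative ``direct Taylor expansion'' route, however, has a real gap as stated. The first-order birth/death terms in $\mathbb{A}^N F$ combine to $-Nk_{\text{fb}}(h^N-h_{\text{eq}})\int \partial_\mu F(x)\,\mu(dx)$, and Part (A) gives only $|h^N-h_{\text{eq}}|\Rightarrow 0$, not $N|h^N-h_{\text{eq}}|\Rightarrow 0$; indeed one expects $h^N-h_{\text{eq}}$ to fluctuate on the $N^{-1/2}$ scale, so $N(h^N-h_{\text{eq}})$ diverges. The cancellation you need does occur, but only because the test functions $F(\mu)=\langle f,\mu^{(m)}\rangle$ in $\bar{\mathcal{C}}$ are built from the \emph{normalized} symmetric product $\mu^{(m)}$, which is insensitive to total mass; extracting this directly requires essentially redoing the exchangeability calculation that the look-down construction packages for you. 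So if you pursue the direct route, you must work with $\mu^{(m)}$ explicitly rather than naively Taylor-expanding in $\mu$, and the argument then collapses back into the particle computation anyway.
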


\begin{remark}
\label{rem2}
Note that the state space of the processes $\hat{\mu}^N$ is $\mathcal
{M}_1(E \times[0,1])$, which is compact and so $\mathcal{P} (
\mathcal{M}_1(E \times[0,1]) )$ is also compact by Prohorov's
theorem. Hence, the distributions of $\hat{\mu}^N(0)$ will certainly
converge along a subsequence and the assertion of the theorem above
will hold for this subsequence. In fact, the distributions of $\hat{\mu
}^N(0)$ converge along the entire sequence (see Remark~\ref
{convergenceofinitialdistribution}).\vspace*{3pt}
\end{remark}

A heuristic explanation for the above result is as follows. As $N$ gets
larger, the extremely fast nature of the birth and death mechanisms
forces the fraction process to immediately settle to an equilibrium
value $h_{\mathrm{eq}}$ [given by part (A) of the above theorem]. Note
that $k_{\mathrm{fb}}h_{\mathrm{eq}}(1-h_{\mathrm{eq}}) = k_{\mathrm
{off}} h_{\mathrm{eq}}$ and so at this equilibrium value, the net
influx of population onto the membrane due to birth matches the net
efflux of population from the membrane due to death. Since the total
mass on the membrane is not allowed to deviate from this equilibrium,
any addition of new mass due to immigration must be concurrently offset
by an equal reduction in existing mass due to death. Hence, in the
limit, the net demographic effect of immigration is the same as that of
mutation and, therefore, we see a mutation-like term in the limiting
generator $\mathbb{A}$ [see the second term in (\ref{gena})].
Similarly, the addition of new mass on the membrane due to birth must
be accompanied by the reduction of equal mass due to death. This gives
rise to the second-order sampling term in $\mathbb{A}$ [see the third
term in (\ref{gena})]. These ideas are made rigorous in the proof of
Theorem \ref{mainconvthm} given in Section \ref{sec4}.

From now on $\nu$ will denote the Fleming--Viot process given in the
statement of Theorem \ref{mainconvthm}. We next claim that $\nu$ has a
unique stationary distribution and it is also \textit{strongly ergodic}
in the sense that its transition function converges asymptotically to
the stationary distribution. In fact, this convergence is exponentially
fast. Let $S$ be any metric space and let $\mathcal{B}(S)$ be the Borel
sigma field on $S$. Define the \textit{total variation} metric over the
space of probability measures $\mathcal{P}(S)$ by
\[
\|v_1-v_2 \|_{\mathrm{var}}=\sup_{\Gamma\in\mathcal
{B}(S)} \|v_1(\Gamma)-v_2(\Gamma) \|.
\]

\begin{proposition}
\label{stationarity}
\textup{(A)} The process $\nu$ is strongly ergodic and it has a~unique
stationary distribution $\Pi\in\mathcal{P}(\mathcal{P}(E
\times[0,1]))$.

\textup{(B)} The transition function of $\nu$ converges to the stationary
distribution exponentially fast. There exists a constant $C>0$ such that
\[
\bigl\| P\bigl(\nu(t) \in\cdot\bigr) -\Pi(\cdot) \bigr\|_{\mathrm{var}} \leq C \exp{
\biggl(-\biggl(\frac{k_{\mathrm{on}} (1-h_{\mathrm{eq}})}{2 h_{\mathrm
{eq}}}\biggr) t \biggr)} .
\]
\end{proposition}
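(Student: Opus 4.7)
The structure of the generator $\mathbb{A}$ places $\nu$ within the well-studied class of Fleming-Viot processes whose mutation operator contains a parent-independent jump component. Indeed, the second term of $\mathbb{A}$ can be rewritten as $\theta\sum_{i=1}^{m}\langle Mf_i - f_i,\mu\rangle\prod_{j\ne i}\langle f_j,\mu\rangle$ with $\theta = k_{\text{on}}(1-h_{\text{eq}})/h_{\text{eq}}$ and $Mf(x) = \int_E\int_{[0,1]} f(y,z)\,\vartheta(dy)\,dz$, so that the individual-particle mutation generator is $A_0 = \tfrac{D}{2}\Delta + \theta(M-I)$ and the resampling intensity is $\gamma = k_{\text{fb}}(1-h_{\text{eq}})/h_{\text{eq}}$. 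For such processes, strong ergodicity and exponential convergence to a unique invariant law are essentially classical; the plan is to invoke the ergodic theory in Ethier and Kurtz \cite{EK93}, together with a coupling on the Donnelly-Kurtz lookdown representation used in the proof of Theorem \ref{mainconvthm}, and to track constants carefully in order to extract the rate stated in part (B).

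For part (A), existence of a stationary distribution $\Pi \in \mathcal{P}(\mathcal{P}(E\times[0,1]))$ will follow from Krylov-Bogoliubov applied to the Feller semigroup of $\nu$ on the compact state space $\mathcal{P}(E\times[0,1])$. For uniqueness and strong ergodicity, I would realize $\nu$ as the de Finetti measure of an exchangeable lookdown particle system and construct two coupled copies driven by the \emph{same} mutation and resampling Poisson events but started from different initial configurations. A lineage that has been hit by at least one parent-independent mutation during $[0,t]$ has a type at time $t$ that is a functional only of post-mutation driving data, namely a fresh $\vartheta\otimes\mathrm{Leb}$-sample followed by an independent Brownian increment, and is therefore identical in the two coupled copies. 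Since lineages mutate independently at rate $\theta$, the fraction of unmutated lineages tends to $0$ in probability, forcing the two coupled Fleming-Viot measures to coincide in the limit; this delivers uniqueness of $\Pi$ and convergence of the transition semigroup to $\Pi$ from arbitrary initial laws.

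Part (B) demands a quantitative version of this coupling, which I would obtain from the function-valued dual of $\mathbb{A}$. Applied to moment test functionals $F(\mu) = \langle f,\mu^m\rangle$, duality expresses $E[F(\nu(t))]$ as an expectation over a Kingman coalescent on $m$ lineages with pairwise coalescence rate $\gamma$ and independent per-lineage evolution under the semigroup of $A_0$; the dependence of this expression on the initial law of $\nu(0)$ enters only through lineages that have not yet been mutated, and the per-lineage probability of avoiding mutation in $[0,t]$ is $e^{-\theta t}$. Summing these moment estimates, combined with the fact that moment functionals are measure-determining on $\mathcal{P}(E\times[0,1])$, translates into a total variation bound on distributions over $\mathcal{P}(E\times[0,1])$ of the form $Ce^{-\theta t/2}$. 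The main obstacle is producing the precise constant $\theta/2$ rather than the cruder per-lineage rate $\theta$: this requires careful second-moment bookkeeping that simultaneously controls the coalescent time scale (so that the ancestral partition is finite and tractable) and the approximation cost incurred in passing from polynomial moment functionals to the total variation norm on distributions of random measures.
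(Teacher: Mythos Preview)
Your identification of the parent-independent mutation structure is exactly right and is all that is needed, but you are working far harder than the paper does. The paper's entire proof is a one-line citation: both parts follow directly from Theorem~5.1 and Corollary~8.4 of Ethier and Kurtz~\cite{EK93}. Once you have written the mutation operator in the form $A_0 = \tfrac{D}{2}\Delta + \theta(M-I)$ with $M$ parent-independent and $\theta = k_{\text{on}}(1-h_{\text{eq}})/h_{\text{eq}}$, the hypotheses of those results are verified and the explicit exponential rate in part~(B) is read off from their statement; no further coupling or duality argument is required.

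Your proposed route---Krylov--Bogoliubov plus a lookdown coupling for part~(A), and the function-valued moment dual for part~(B)---is essentially a sketch of how one would \emph{reprove} the cited Ethier--Kurtz results from scratch. That is not wrong, and it is instructive, but it is redundant here. In particular, the difficulty you flag at the end (obtaining the precise rate $\theta/2$ rather than the cruder per-lineage rate $\theta$, and controlling the passage from moment functionals to total variation on $\mathcal{P}(\mathcal{P}(E\times[0,1]))$) is a genuine technical point if one insists on a self-contained argument, but it dissolves entirely once you simply invoke Corollary~8.4 of~\cite{EK93} with the correct parameter identification. The paper takes the latter shortcut; your approach would re-derive what the reference already provides.
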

\begin{pf}
Both parts follow from Theorem 5.1 and Corollary 8.4 in Ethier and
Kurtz \cite{EK93}.
\end{pf}

Define a $\mathcal{P}([0,1])$-valued process $\nu_c$ by
%
%
\begin{equation}
\label{clanprocess}
\nu_c(t,A) = \nu(t, E \times A),\qquad A \in\mathcal{B}([0,1]) \mbox{
and } t \geq0.
\end{equation}
We will refer to $\nu_c$ as the \textit{clan process}, as it will help us
in the determination of clan sizes. We shall discuss this further in
Section \ref{sec3}. As a consequence of Theorem~\ref{mainconvthm}, we
get the
following corollary.
\begin{corollary}
\label{corrclanprocess}
The process $\nu_c$ is the Fleming--Viot process with type space $[0,1]$
and generator $\mathbb{A}_c$ given by
\begin{eqnarray*}
\mathbb{A}_c F(\mu)& = &k_{\mathrm{on}} \frac{(1-h_{\mathrm
{eq}})}{h_{\mathrm{eq}}} \sum_{i=1}^{m} \int_{[0,1]} \bigl( f_i(z) \,dz -
f_i(x) \mu(dx) \bigr) \prod_{j \neq i} \langle f_j ,\mu\rangle
\\
&&{}+\frac{k_{\mathrm{fb}}(1-h_{\mathrm{eq}})}{h_{\mathrm{eq}}} \sum
_{1\leq i \neq j \leq m} ( \langle f_i f_j,\mu\rangle
- \langle f_i,\mu\rangle\langle f_j,\mu
\rangle) \prod_{k \neq i,j} \langle f_k,\mu\rangle,
\end{eqnarray*}
where $F(\mu) = \prod_{i=1}^{m} \langle f_i , \mu\rangle$
and $f_i \in C([0,1])$ for $i=1,2,\ldots,m$.
\end{corollary}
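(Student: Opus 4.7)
My plan is to verify that $\nu_c$ solves the martingale problem for $\mathbb{A}_c$ and then invoke uniqueness of solutions to Fleming-Viot martingale problems. I would fix a test function $F(\mu)=\prod_{i=1}^{m}\langle f_i,\mu\rangle$ with $f_i\in C([0,1])$, and lift each $f_i$ to $\tilde f_i\in C(E\times[0,1])$ by $\tilde f_i(y,z)=f_i(z)$. Since $\tilde f_i$ is constant in the $y$-variable, $\Delta\tilde f_i\equiv 0$, and $\tilde f_i\in\mathcal{C}\cap C(E\times[0,1])$, so $\tilde F(\mu)=\prod_{i=1}^m\langle \tilde f_i,\mu\rangle$ lies in the domain of $\mathbb{A}$. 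By the definition of $\nu_c$, $\langle\tilde f_i,\nu(t)\rangle=\langle f_i,\nu_c(t)\rangle$ and $\langle\tilde f_i\tilde f_j,\nu(t)\rangle=\langle f_if_j,\nu_c(t)\rangle$, so $\tilde F(\nu(t))=F(\nu_c(t))$ pathwise.

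The key step is to compute $\mathbb{A}\tilde F$ using (\ref{gena}). The Brownian diffusion term vanishes because $\Delta\tilde f_i=0$. Using the normalization $\vartheta(E)=1$, the immigration integral collapses to $\int_{[0,1]} f_i(z)\,dz-\langle f_i,\nu_c(t)\rangle$ times $\prod_{j\ne i}\langle f_j,\nu_c(t)\rangle$. Likewise, each sampling summand contributes $\langle f_if_j,\nu_c(t)\rangle-\langle f_i,\nu_c(t)\rangle\langle f_j,\nu_c(t)\rangle$ times $\prod_{k\ne i,j}\langle f_k,\nu_c(t)\rangle$. Comparing term by term with the formula for $\mathbb{A}_c$ in the statement gives $\mathbb{A}\tilde F(\nu(t))=\mathbb{A}_c F(\nu_c(t))$.

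From the martingale property for $\nu$ I would then conclude that
\[
F(\nu_c(t))-\int_0^t \mathbb{A}_c F(\nu_c(s))\,ds \;=\; \tilde F(\nu(t))-\int_0^t \mathbb{A}\tilde F(\nu(s))\,ds
\]
is a martingale with respect to the filtration generated by $\nu$; since the left-hand side is $\nu_c$-adapted, the tower property promotes it to a martingale with respect to the filtration generated by $\nu_c$ as well. Thus $\nu_c$ solves the martingale problem for $\mathbb{A}_c$, with initial distribution equal to the pushforward of $\pi_0$ under $\mu\mapsto\mu(E\times\cdot)$. Theorem 3.2 of Ethier and Kurtz \cite{EK93} gives well-posedness of the Fleming-Viot martingale problem for $\mathbb{A}_c$ on $\mathcal{P}([0,1])$, which identifies $\nu_c$ with that Fleming-Viot process. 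I expect no real obstacle; the only things to check are that the lifts $\tilde f_i$ genuinely belong to $\mathcal{C}\cap C(E\times[0,1])$ and that products of the form $\prod_i\langle f_i,\mu\rangle$ with $f_i\in C([0,1])$ are rich enough to determine the martingale problem for $\mathbb{A}_c$, both of which are routine.
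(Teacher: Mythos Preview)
Your proposal is correct and is exactly the argument the paper has in mind: the paper's own proof says only that ``the proof is immediate from the definition of $\nu_c$ and the descriptions of the generators $\mathbb{A}$ and $\mathbb{A}_c$,'' and what you have written is precisely the routine unpacking of that sentence (lift $f_i$ to $\tilde f_i(y,z)=f_i(z)$, observe $\Delta\tilde f_i=0$, match the remaining terms, and invoke well-posedness). There is no substantive difference in approach.
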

\begin{pf}
The proof is immediate from the definition of $\nu_c$ and the
descriptions of the generators $\mathbb{A}$ and $\mathbb{A}_c$.
\end{pf}

Since the molecules are constantly diffusing on the membrane, we would
expect each clan to spread out more and more with time. However, we
will show that this does not happen in our model. We would like to
measure the average spatial spread of the molecules that belong to the
same clan. One way to measure it would be to randomly sample two
molecules from the membrane population at any time $t$ and compute
their expected distance squared, given that they are in the same clan.
We call this quantity $S_p(t)$. For $i=1,2$ let $X_i(t) =
(Y_i(t),C_i(t)) \in E \times[0,1]$ be the sampled molecules. Then
given $\nu(t)$, $X_1(t)$ and $X_2(t)$ are i.i.d. with common
distribution $\nu(t)$.
Therefore,
\begin{eqnarray*}
S_p(t)& = &E \bigl( \| Y_1(t)-Y_2(t) \|^2 \vert
C_1(t)=C_2(t)\bigr) \\
& = &\frac{E ( \| Y_1(t)-Y_2(t) \|^2 1_{ \{
C_1(t)=C_2(t) \}})}{P ( C_1(t)=C_2(t) )} \\
& = &\frac{E ( E ( \| Y_1(t)-Y_2(t) \|^2 1_{ \{
C_1(t)=C_2(t) \}} \vert\nu(t) ) )}{ E ( E
( C_1(t)=C_2(t) \vert\nu(t) ) )} \\
&=&\frac{E (\int_{E} \int_{[0,1]} \| y_1-y_2 \|^2 1_{\{
c_1=c_2\}} \nu(t,dy_1,dc_1)\nu(t,dy_2,dc_2))}{E(\int_{E} \int
_{[0,1]} 1_{\{c_1=c_2\}} \nu(t,dy_1,dc_1)\nu(t,dy_2,dc_2))}.
\end{eqnarray*}
From Proposition \ref{stationarity} we know that the process $\nu$ has
a unique stationary distribution $\Pi\in\mathcal{P}(\mathcal{P}(E
\times[0,1]))$. At stationarity, $S_p(t)$ does not depend on~$t$ and
can be written as
%
%
\begin{eqnarray}
\label{spreadst}
S_p&=& \int_{\mathcal{P}(E \times[0,1])}
\biggl(\int_{E} \int_{[0,1]} \| y_1-y_2 \|^2 1_{\{c_1=c_2\}}
\mu(dy_1,dc_1)\mu(dy_2,dc_2)\biggr) \nonumber\\
&&\hphantom{\int_{\mathcal{P}(E \times[0,1])}}
{}\times\Pi(d \mu) \\
&&{}\times\biggl({\int_{\mathcal{P}(E
\times[0,1])} \biggl(\int_{E} \int_{[0,1]} 1_{\{c_1=c_2\}}
\mu(dy_1,dc_1)\mu(dy_2,dc_2)\biggr)\Pi(d \mu)}\biggr)^{-1}.\hspace*{-18pt}\nonumber
\end{eqnarray}
The theorem below gives a precise formula for $S_p$. It will be proved
in Section~\ref{sec4}.

\begin{theorem}
\label{theoremspread}
Let $\alpha=\frac{1-h_{\mathrm{eq}}}{h_{\mathrm{\mathrm{eq}}}}=\frac
{k_{\mathrm{off}}}{k_{\mathrm{fb}}-k_{\mathrm{off}}}$. Then
\[
S_p=\frac{2 D}{ ( (k_{\mathrm{on}}+k_{\mathrm{fb}}) \alpha+
{D}/{R^2} )}.
\]
\end{theorem}

In the next section we connect all the results mentioned in this
section and present the complete picture in our biological setting.

\section{The biological interpretation}\label{sec3}

In this paper our main objective is to show that if we take the model
for cell polarity given by Altschuler, Angenent, Wang and Wu \cite
{AAWW} (see Description \ref{description}) and scale the parameters~$k_{\mathrm{fb}}$
and $k_{\mathrm{off}}$ by the population size $N$,
then, unlike the original model, we get cell polarity in the infinite
population limit. In this section we describe how the results mentioned
in the last section help us in making this conclusion. These results
will also give us an insight into the influence of various biological
parameters on polarity.

The main convergence result, Theorem \ref{mainconvthm}, shows that as
$N \to\infty$ the fraction of the molecules on the membrane at any
time is equal to $h_{\mathrm{eq}}$ and the dynamics of cell molecules
is given by a measure-valued process $\mu$ where $\mu= h_{\mathrm{eq}}
\nu$ with $\nu$ being a Fleming--Viot process. The process $\nu$ has a
unique stationary distribution and its transition function converges
exponentially to this stationary distribution (see Proposition \ref
{stationarity}).

At any time, the molecules on the membrane can be divided into clans
based on their ancestral relationships. We now determine the
distribution of the clan sizes. Let $\nu_c$ be the process given by
(\ref{clanprocess}). From Corollary \ref{corrclanprocess} we know that
it is a Fleming--Viot process with type space $[0,1]$ and generator~$\mathbb{A}_c$.
Such a Fleming--Viot process arises as a reformulation
of the infinitely-many-neutral-alleles model due to Kimura and Crow
\cite{KC64} (see \cite{EK} for more details). By Theorem 7.2 in Ethier
and Kurtz \cite{EK93}, at any time $t$, the random probability measure
$\nu_c(t)$ is purely atomic. This means that $\nu_c(t)$ is of the form
$\sum_{i=0}^{\infty} p_i \delta_{x_i}$, where $p_i$ is the size of the
atom corresponding to the point mass at $x_i$. At any $t \geq0$ and
any clan indicator $z \in[0,1]$, the size of the clan at time $t$
corresponding to $z$ is just $\mu(t,E \times\{z\})$. The sum of all
the clan sizes is quite clearly $h_{\mathrm{eq}}$. If we normalize each
clan size by dividing it by $h_{\mathrm{eq}}$, then the normalized size
of the clan at time $t$ corresponding to $z$ is just $\nu(t,E \times\{
z\}) = \nu_c(t,\{z\})$. In other words, the normalized clan sizes at
time $t$ are nothing but the sizes of the atoms in $\nu_c(t)$. From now
on by \textit{clan size} we always mean the \textit{normalized clan size}.

The assertions of Proposition \ref{stationarity} are true for $\nu_c$
as well. Let $\Lambda_{\infty}$ be the infinite simplex given by
\[
\Lambda_{\infty}=\Biggl\{ (x_1,x_2,\ldots) \dvtx\sum_{i=1}^{\infty} x_i=1
\mbox{ and } 0<x_i<1, i=1,2,\ldots\Biggr\}.
\]
$\operatorname{GEM}(\theta)$ distribution is a distribution over the infinite simplex
$\Lambda_{\infty}$ that depends on a parameter $\theta\in[0,\infty)$.
This distribution is named after three population geneticists
McCloskey, Engen and Griffiths (see Johnson, Kotz and Balakrishnan \cite
{Bala} and Pitman and Yor \cite{Pitman}). It is defined as below.
\begin{definition}[{[$\operatorname{GEM}(\theta)$ distribution]}] \label{gem}
Let $\{W_n\dvtx n=1,2,\ldots\}$ be a sequence of i.i.d.
$\operatorname{Beta}(1,\theta)$
random variables [i.e., each $W_i$ has density $\theta(1-x)^{\theta
-1}$ for $0<x<1$]. Define $P_1=W_1$ and $P_n=(1-W_1)(1-W_2)\cdots
(1-W_{n-1})W_n$ for $n \geq1$. Then the sequence $\{P_n\dvtx
n=1,2,\ldots\}
$ is said to have the $\operatorname{GEM}(\theta)$ distribution.
\end{definition}

If we define
%
%
\begin{equation}
\label{deftheta}
\theta= \frac{k_{\mathrm{on}}}{k_{\mathrm{fb}}},
\end{equation}
then at stationarity the sizes of the atoms in $\nu_c(t)$ are
distributed according to the $\operatorname{GEM}(\theta)$ distribution. This is a
direct consequence of Theorem 4.6 in Chapter 10, Ethier and Kurtz \cite
{EK}. This result shows that at stationarity there are infinitely many
clans on the membrane and their sizes follow the $\operatorname{GEM}(\theta)$
distribution. If we arrange these sizes in descending order, then the
resulting random infinite vector has the Poisson--Dirichlet distribution
with the same parameter $\theta$ (see Chapter~2 in \cite{FengShui}).
The Poisson--Dirichlet distribution was introduced by Kingman \cite
{King75} in 1975 and many of its properties are well known. This
characterization of clan sizes at stationarity makes it possible to
compute the distribution and moments of the largest clan size, second
largest clan size, third largest clan size and so on (see Griffiths
\cite{Gr}). The joint distribution of the first few largest clans can
also be obtained (see Watterson \cite{Wat}). If we sample $n$ molecules
from the membrane at stationarity from $\nu(t)$, then the
distributional properties of the clans represented by this sample can
be studied via the Ewen's Sampling Formula (see \cite{Ewens}). All
these results indicate that the clan sizes at stationarity are far from
uniform and there are a few \textit{large} clans and many \textit{small}
clans. Most of the molecules are contained in these few large clans. In
fact, if we sample $n$ membrane molecules at stationarity, then they
would belong to roughly $\theta\log n$ distinct clans asymptotically
(see Theorem 2.11 in \cite{FengShui}).

The quantity $S_p$ [given by (\ref{spreadst})] measures the average
spatial spread of the clans and its value at stationarity is given by
Theorem \ref{theoremspread}. At stationarity there are only a few large
clans and if $S_p$ is small relative to the cell size, then the spatial
spread of these large clans is also small. Therefore, the distribution
of molecules at stationarity is highly asymmetrical at all times.

We now discuss the emergence of cell polarity. First we need to define
it mathematically.
\begin{definition}[($\varepsilon$-polarity)]
\label{defcellpolarity}
For any $ 0 < \varepsilon\ll1$, we say that the cell is $\varepsilon
$-polarized if at least $(1-\varepsilon)$ fraction of the membrane
population belongs to a single clan and also resides in a single
hemisphere on the membrane.
\end{definition}

The above definition is motivated by the biological literature (see
\cite{Cove,Cove2,AAWW}). Note that the molecules in a clan will
generally appear to cluster around the location of their most recent
common ancestor (see \cite{DH2}). Therefore, as in~\cite{AAWW}, if
diffusion is small, having one predominant clan on the membrane is
a~good indication that a single site of polarity has formed.

At stationarity, the probability that the cell is $\varepsilon$-polarized
at any time $t$ can be expressed as
%
%
\begin{eqnarray}
\label{defpepsilon}\qquad
p_{\varepsilon} & = & \Pi\bigl( \bigl\{ \beta\in\mathcal{P}(E \times
[0,1]) \mbox{: there exists a } z\in[0,1]\nonumber\\[-8pt]\\[-8pt]
&&\hspace*{15pt} \mbox{ and a hemisphere }  H \subset E \mbox{
such that } \beta(H \times\{z\}) \geq1-\varepsilon\bigr\} \bigr),
\nonumber
\end{eqnarray}
where $\Pi\in\mathcal{P}(\mathcal{P}(E \times[0,1]))$ is the
stationary distribution of the process $\nu$. We mentioned before that
at stationarity, the vector of clan sizes in descending order follows
the Poisson Dirichlet distribution with parameter $\theta$. Let $V_1$
be the size of the largest clan. For any $\varepsilon>0$, Theorem 2.5 in
\cite{FengShui} implies that
%
%
\begin{equation}
\label{defqepsilon}
q_{\varepsilon} := P \bigl( V_1 > \sqrt{1- \varepsilon} \bigr) > 0.
\end{equation}
Suppose that we are at stationarity. Let $r_{\varepsilon}$ denote the
probability that $(\sqrt{1- \varepsilon})$-fraction of the molecules in
the largest clan are situated in a single hemisphere on the membrane
given that the size of the largest clan is at least $\sqrt{1-
\varepsilon
}$. Observe that $S_p$ is like a weighted average of the spatial
spreads of the clans, where the weight of each clan is proportional to
its size. Therefore, if almost all the molecules are in the largest
clan, then $S_p$ is approximately the spatial spread of the largest
clan. Hence, if $S_p$ is small in comparison to the cell size, we can
reasonably expect $r_{\varepsilon}$ to be positive. Observe that
$p_{\varepsilon} \geq q_{\varepsilon} r_{\varepsilon}$ and so
$p_{\varepsilon}$ is
also positive for a small positive $\varepsilon$. Since the process $\nu$
is ergodic, Birkhoff's ergodic theorem (see Theorem 10.6 and Corollary
10.9 in \cite{kallenberg}) implies that the cell will definitely reach
the $\varepsilon$-polarized state and, in fact, spend $p_{\varepsilon}$
proportion of its time there. Thus, the cell gets $\varepsilon$-polarized
infinitely often and we have recurring cell polarity.

Before we proceed we need to define some new quantities. Let
%
%
\begin{eqnarray}
\label{defspbar}
\bar{S}_p &=& \frac{S_p}{R^2},
\\
\label{defvartheta}
\chi&=& \frac{D}{R^2}
\end{eqnarray}
and
%
%
\begin{equation}
\label{defgamma}
\gamma= k_{\mathrm{fb}} \biggl( \frac{1 - h_{\mathrm{eq}}}{h_{\mathrm
{eq}}} \biggr) = \biggl( \frac{k_{\mathrm{fb}} k_{\mathrm
{off}}}{k_{\mathrm{fb}}- k_{\mathrm{off}}}\biggr).
\end{equation}
We can interpret $\bar{S}_p$ as the average spatial spread of the clans
relative to the cell size, while $\chi$ can be seen as the speed of
diffusion relative to the cell size. Note that the ratio $(1-h_{\mathrm
{eq}})/h_{\mathrm{eq}}$ is the molecular mass available in the cytosol
for recruitment per membrane molecule. The parameter $\gamma$ is just
the feedback rate tempered by this availability ratio. It can be
interpreted as the effective feedback strength.
We can recast the result of Theorem \ref{theoremspread} as
%
%
\begin{equation}
\label{formbarsp}
\bar{S}_p = \frac{2 \chi}{ ( (1+\theta)\gamma+\chi)}.
\end{equation}

Recall that the biological parameters in our model are $D$, $R$,
$k_{\mathrm{on}}$, $k_{\mathrm{fb}}$ and $k_{\mathrm{off}}$. We now
examine their impact on cell polarity. Instead of working with the
original parameters, we will work with $\theta$, $\chi$ and $\gamma$.
From the above discussion it is clear that the formation of polarity
will be facilitated if the probability $q_{\varepsilon}$ [given by (\ref
{defqepsilon})] is high while the quantity $\bar{S}_p$ is low.
As noted earlier, the parameter $\theta$ controls the distribution of
molecules into the infinitely many clans present at stationarity. From
the properties of Poisson Dirichlet distributions we know that the
probability $q_{\varepsilon}$ decreases as~$\theta$ increases and
vice-versa (see \cite{DawsonFeng} and Chapter 2 in~\cite{FengShui}). In
fact, it can be shown that this probability is nearly $1$ if $\theta
\approx0$ (see \cite{FengShui2}). Hence, polarity will establish more
easily if $\theta$ is small. Recall that the process $\nu_c$ [given by
(\ref{clanprocess})] is the Fleming--Viot process corresponding to the
infinitely-many-neutral-alleles model. The sample paths of this process
take values over the space of atomic\vadjust{\goodbreak} measures over $[0,1]$. Using
Dirichlet forms, Schmuland \cite{Schmuland} has shown that with
probability $1$ there will exist times at which this process will hit
the state of having a single atom if and only if $\theta<1$. Therefore,
$\theta<1$ assures that there will exist times when there is only one
clan present. At these times the chances of observing polarity will be
nearly\vspace*{1pt} $1$ if $\bar{S}_p$ is sufficiently small. The formula (\ref
{formbarsp}) makes it clear that the quantity $\bar{S}_p$ gets smaller
as the relative diffusion speed ($\chi$) goes down or the effective
feedback strength ($\gamma$) goes up.

Recall that the likelihood of finding a cell in the $\varepsilon
$-polarized state at any time at stationarity is given by
$p_{\varepsilon
}$ [given by (\ref{defpepsilon})]. The observations made in the
preceding paragraph show that $p_{\varepsilon}$ increases with $\gamma$
but decreases with $\theta$ and $\chi$. Unfortunately, we do not have a
precise formula for $p_{\varepsilon}$ at the moment. Such a formula would
be really useful in determining the chances of observing polarity in a
cell with any given set of parameters. It will also give us a clear
idea of the time spent by the cell in the polarized state.

We would now like to compare our results to the results presented in
\cite{AAWW} for the original model. To avoid confusion, we will denote
the association, dissociation and recruitment rates in the original
model as $k^{o}_{\mathrm{on}}$, $k^{o}_{\mathrm{off}}$ and
$k^{o}_{\mathrm{fb}}$, respectively. Note that under our scaling
$k^{o}_{\mathrm{on}} = k_{\mathrm{on}}$ while $k^{o}_{\mathrm{off}} = N
k_{\mathrm{off}}$ and $k^{o}_{\mathrm{fb}} = N k_{\mathrm{fb}}$, where
$N$ is the total population size. The analysis in \cite{AAWW} assumes
that $k^{o}_{\mathrm{fb}}>k^{o}_{\mathrm{off}}$ and $k^{o}_{\mathrm
{on}}$ is much smaller in comparison to $k^{o}_{\mathrm{off}}$ or
$k^{o}_{\mathrm{fb}}$. Observe that spontaneous membrane association
tends to spatially homogenize the molecules on the membrane and so if
$k^{o}_{\mathrm{on}}$ is not small in comparison, we cannot hope to see
cell polarity. Under the above assumptions it is shown in \cite{AAWW}
that the fraction of molecules on the membrane approaches the
equilibrium value
\[
h_{\mathrm{eq}} = 1 -\frac{k^{o}_{\mathrm{off}}}{k^{o}_{\mathrm{fb}}}+
O\biggl( \frac{k^{o}_{\mathrm{on}}}{k^{o}_{\mathrm{fb}}} \biggr),
\]
at an exponential rate with half-time of $(h_{\mathrm{eq}}
k^{o}_{\mathrm{fb}})^{-1}$. In our scaling, $k^{o}_{\mathrm{on}} =
O(1)$, while $k^{o}_{\mathrm{fb}}$ and $k^{o}_{\mathrm{off}}$ are
$O(N)$. Therefore, it is not surprising that as $N \to\infty$ the
fraction of molecules on the membrane reaches the equilibrium value
\[
h_{\mathrm{eq}} = 1 - \frac{k_{\mathrm{off}}}{k_{\mathrm{fb}}}
\]
almost instantly [see part (A) of Theorem \ref{mainconvthm}]. Since
$k_{\mathrm{on}}$ is small, the bulk of the population at equilibrium
must come through membrane recruitment. It is mentioned in \cite{AAWW}
that if $k^{o}_{\mathrm{fb}} \leq k^{o}_{\mathrm{off}}$, the membrane
will be nearly empty at equilibrium and so clusters cannot form. For
the same reason Assumption~\ref{assmp1} is required in this paper.

As we have discussed above, the emergence of cell polarity crucially
depends on the likelihood of having just one large clan on the
membrane. It is shown in \cite{AAWW} that for a finite population size
$N$, the number of clans on the membrane will reduce to just $1$ at
certain times, giving rise to polarity (if $D$ is small), if the
spontaneous association events are rare ($k^{o}_{\mathrm{on}}$ is
small). However, the frequency at which these times arrive is
proportional to $1/N$ and, hence, there is no recurring polarity in the
infinite population limit unlike the rescaled model that we consider.

It is observed in \cite{AAWW} that the clustering behavior for the
original model is entirely determined by a simple relationship between
the ratio $\frac{k^{o}_{\mathrm{on}}}{k^{o}_{\mathrm{fb}}}$\vspace*{-1pt} and the
population size~$N$. Their analysis shows that if $\frac{k^{o}_{\mathrm
{on}}}{k^{o}_{\mathrm{fb}}} \ll N^{-2} $, then certainly one cluster
will form and if $\frac{k^{o}_{\mathrm{on}}}{k^{o}_{\mathrm{fb}}} \gg
(N^{-1}\log{N})^{1/2} $, then no clusters will form.\vspace*{-1pt} Using numerical
simulations, they observe that the transition occurs when $\frac
{k^{o}_{\mathrm{on}}}{k^{o}_{\mathrm{fb}}} \approx N^{-1}$. This
motivated us to scale $k^{o}_{\mathrm{fb}}$ by a factor of $N$ and
analyze the resulting model. We also had to scale $k^{o}_{\mathrm
{off}}$ by $N$ because otherwise the entire population will soon be on
the membrane (as $h_{\mathrm{eq}}$ will then be $1$), depleting the
cytosol and preventing further membrane recruitment. In such
a~scenario, the feedback mechanism will be unable to counter the surface
diffusion and, hence, there will not be any lasting cell polarity. Note
that having $k^{o}_{\mathrm{fb}} = N k_{\mathrm{fb}}$ is the same as
changing the feedback rate in Description~\ref{description} to
$k_{\mathrm{fb}} \times\mbox{(\textit{number of molecules in the
cytosol})}$. This is the same as saying that each membrane molecule
recruits each cytosol molecule at rate~$k_{\mathrm{fb}}$. Such
a~definition may be more natural for the feedback circuits found in
certain cells. Our results provide an explanation for the existence of
cell polarity in such cells if the population size is large.

There are many biologically appealing questions about the model that we
have been unable to answer in this paper. As we mentioned above, it
would be useful be have precise estimates for $p_{\varepsilon}$. It would
also be interesting to compute the time it takes to hit the $\varepsilon
$-polarized state and the time the cell stays polarized after that.
These results would give us a better idea about the the onset and
maintenance of polarity. The role of various model parameters will
emerge clearly as well.

The model we study does have the drawback of being simplistic, as all
the molecules in the cell are identical. Most cells that exhibit
polarity have molecules of many different types recruiting each other
at various type-dependent rates (see \cite{DN,AAWWref9,Takaku}). We
would like to know if a multi-type generalization of our polarity model
would also lead to tractable measure-valued dynamics in the infinite
population limit. We hope to answer this question elsewhere in the very
near future.

In this paper we have only looked at \textit{single-site} polarity. Many
cells exhibit \textit{anterior--posterior} polarity (see \cite
{Shapiro,Seydoux,Evans0}). In such cells there are usually two types of
molecules and they segregate themselves in such a way that one type of
molecule forms the \textit{front} and the other type of molecule forms
the \textit{rear}. Such an arrangement is vital for cell division and
locomotion. It has been suggested that this phenomenon is caused when
molecules not only recruit the molecules of their own type but also
locally inhibit the recruitment of the other type. It may be possible
to extend the model considered here to account for anterior--posterior
polarity as well.

The story of cell polarity is far from over and we hope that more work
will be done to mathematically understand this biologically vital
phenomenon and answer the challenging questions it poses.

\section{Proofs}\label{sec4}

In this section we prove the main results of our paper: Theorems \ref
{mainconvthm} and \ref{theoremspread}.

\subsection{\texorpdfstring{Proof of part \textup{(A)} of Theorem \protect\ref{mainconvthm}}%
{Proof of part (A) of Theorem 2.3}}
Recall that $n^N(t) = N \langle1,\mu^N(t)\rangle$ denotes
the number of molecules on the membrane at time $t$. Since $\mu^N$ has
generator~$\mathbb{A}^N$, we can write the generator $K^N$ for the
$\mathbb{N}_{0}$-valued process $n^N$ as the following. For $f \in
C( \mathbb{R})$ let
\begin{eqnarray*}
K^N f(n)& = &k_{\mathrm{on}}(N-n) \bigl(f(n+1)-f(n) \bigr)+
N k_{\mathrm{off}} n \bigl(f(n-1)-f(n)\bigr) \\
&&{} + k_{\mathrm{fb}} n(N-n) \bigl( f(n+1)-f(n) \bigr).
\end{eqnarray*}
We start with nothing on the membrane and, hence, $n^N(0)=0$. The form
of the generator $K^N$ allows us to write the equation for $n^N$ as
%
%
\begin{eqnarray}
\label{main}
n^N(t)& =& Y_1\biggl( k_{\mathrm{on}} \int_{0}^{t} \bigl(N - n^N(s)
\bigr)\,ds \biggr) - Y_2\biggl(N k_{\mathrm{off}} \int_{0}^{t} n^N(s)\,ds
\biggr)\nonumber\\[-8pt]\\[-8pt]
&&{} + Y_3\biggl( N k_{\mathrm{fb}} \int_{0}^{t} n^N(s)\biggl(1-\frac
{n^N(s)}{N} \biggr) \,ds \biggr). \nonumber
\end{eqnarray}

We would like to estimate the first time $n^N$ reaches a positive
fraction of the population size $N$. Pick an $\varepsilon>0$ such that
$k_{\mathrm{fb}}(1-\varepsilon) > k_{\mathrm{off}}$ and define
%
%
\begin{equation}
\label{rhoN}
\rho_{\varepsilon}^N= \inf{ \{t \geq0 \dvtx n^N(t) \geq N \varepsilon
\}}.
\end{equation}

\begin{lemma}
\label{lem1}
Let $\lambda=k_{\mathrm{fb}}(1-\varepsilon) - k_{\mathrm{off}}$. Then
\[
\lim_{N \to\infty} P\biggl( \rho_{\varepsilon}^N \leq\frac{2 \log
{N}}{\lambda N} \biggr)= 1.
\]
Moreover, $\rho_{\varepsilon}^N \to0$ a.s. as $N \to\infty$.
\end{lemma}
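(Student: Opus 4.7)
The plan is to sandwich $n^N$ from below by a linear birth-death-with-immigration process $\tilde n^N$ whose first passage to $N\epsilon$ can be controlled directly. Set $a_N = k_{\text{fb}}(1-\epsilon)N$, $b_N = k_{\text{off}}N$, and $\mu_N = k_{\text{on}}(1-\epsilon)N$, so that $a_N - b_N = N\lambda$. On the probability space supporting the three Poisson random measures underlying (\ref{main}), I would construct $\tilde n^N$ driven by the same measures, with immigration rate $\mu_N$, per-individual birth rate $a_N$, and per-individual death rate $b_N$. A jump-by-jump check shows that, whenever $n^N \leq N\epsilon$ and $\tilde n^N \leq n^N$, each of the three rates for $\tilde n^N$ lies below the corresponding rate for $n^N$; the coupling then preserves $\tilde n^N(t) \leq n^N(t)$ for every $t \leq \rho_\epsilon^N$, and it suffices to bound $\tilde\rho_\epsilon^N := \inf\{t \geq 0 : \tilde n^N(t) \geq N\epsilon\}$.

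The second step is a lineage decomposition. The process $\tilde n^N$ is the superposition of independent linear birth-death processes $Z_1, Z_2, \ldots$, each started with a single individual at its immigration time. Set $\tau_N = (\log N)/(N\lambda)$. The number $M_N$ of immigration events in $[0, \tau_N]$ is Poisson with mean $\mu_N \tau_N = k_{\text{on}}(1-\epsilon)(\log N)/\lambda \to \infty$. Any lineage started by time $\tau_N$ has at least $\tau_N$ units of time to grow before $2\tau_N$, and the classical first- and second-moment formulas for a supercritical linear birth-death process yield $E[Z_i(\tau_N)] = e^{N\lambda \tau_N} = N$ and $\mathrm{Var}(Z_i(\tau_N)) \leq C N^2$ for a constant $C$ independent of $N$. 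The Paley-Zygmund inequality applied at the threshold $\epsilon\, E[Z_i(\tau_N)] = N\epsilon$ then furnishes a constant $\alpha = \alpha(\epsilon) > 0$ with $P(Z_i(\tau_N) \geq N\epsilon) \geq \alpha$ for all large $N$.

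Conditional on $M_N$ and the immigration times, the lineages are independent, so the probability that none of them reaches $N\epsilon$ by time $2\tau_N$ is at most $E[(1-\alpha)^{M_N}] = \exp(-\alpha \mu_N \tau_N) = N^{-\alpha k_{\text{on}}(1-\epsilon)/\lambda}$, which tends to zero. This proves the first assertion. For the almost-sure convergence I would rerun the argument with $K\tau_N$ in place of $2\tau_N$ for arbitrary $K > 1$: the immigration window extends to $[0, (K-1)\tau_N]$, each surviving lineage still has at least $\tau_N$ time to grow, and the tail bound improves to $N^{-(K-1)\alpha k_{\text{on}}(1-\epsilon)/\lambda}$. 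Choosing $K$ large enough that the exponent exceeds $1$ makes this tail summable in $N$, so the Borel-Cantelli lemma gives $\rho_\epsilon^N \leq K\tau_N$ eventually almost surely, and hence $\rho_\epsilon^N \to 0$ almost surely.

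The main obstacle, I expect, is executing the coupling cleanly so that $\tilde n^N(t) \leq n^N(t)$ really persists through every jump while $n^N < N\epsilon$. Two independently constructed Markov chains will not satisfy any such pathwise ordering; both processes must be realized as stochastic integrals against common Poisson random measures on $[0,\infty)^2$, and the threshold inequalities have to be verified separately for immigration, recruitment, and dissociation events, using in particular that $k_{\text{fb}}(1-\epsilon) N k_1 \leq k_{\text{fb}} k_2 (N - k_2)$ whenever $k_1 \leq k_2 \leq N\epsilon$. A secondary point is the supercritical linear birth-death variance formula, which is standard (via probability generating functions) but should be stated or cited explicitly.
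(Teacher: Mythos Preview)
Your argument is correct, but it diverges from the paper's in one structural choice that is worth noting. The paper first rescales time, setting $\tilde n^N(t)=n^N(t/N)$, so that the SDE for $\tilde n^N$ is driven by the \emph{same} Poisson processes $Y_1,Y_2,Y_3$ for every $N$ with $N$-independent rates on $[0,\tilde\rho_\epsilon^N)$. One comparison process $Z$ (a supercritical branching process with immigration, immigration rate $k_{\text{on}}(1-\epsilon)$, per-particle birth rate $k_{\text{fb}}(1-\epsilon)$, death rate $k_{\text{off}}$) then sits below \emph{all} the $\tilde n^N$ simultaneously. Invoking the Kesten--Stigum type limit $e^{-\lambda t}Z(t)\to W>0$ a.s.\ (Athreya--Ney) gives $\bar\rho_\epsilon^N/\log N\to 1/\lambda$ a.s.\ directly, and both conclusions of the lemma fall out at once without a separate Borel--Cantelli step.

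Your route keeps the original clock, so the minorant $\tilde n^N$ depends on $N$, and you control each $\tilde\rho_\epsilon^N$ separately through the lineage decomposition, second-moment/Paley--Zygmund bound, and a tail estimate $N^{-c}$; Borel--Cantelli is then needed for the almost-sure statement. This is more hands-on but entirely self-contained: you never call on a limit theorem for branching processes, only the explicit mean and variance formulas. The coupling issue you flag is exactly the same in both proofs (the paper's ``we clearly have $Z(t)\le\tilde n^N(t)$'' hides the same Poisson-random-measure verification), so you are not taking on extra work there. The trade-off is elegance versus elementarity: the time change buys a single $N$-free comparison process and a one-line a.s.\ argument; your approach buys freedom from the Athreya--Ney citation at the cost of the moment computation and the extra $K$-parameter run for summability.
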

\begin{pf}
We first slow the time by a factor of $N$. Let $\tilde
{n}^N(t)=n^N(t/N)$. Since $n^N$ satisfies equation (\ref{main}), $\tilde
{n}^N $ satisfies
%
%
\begin{eqnarray}
\label{smain}
\tilde{n}^N(t) &=& Y_1\biggl(k_{\mathrm{on}} \int_{0}^{t} \biggl(1 - \frac
{\tilde{n}^N(s)}{N}\biggr)\,ds \biggr) - Y_2\biggl( k_{\mathrm{off}} \int
_{0}^{t} \tilde{n}^N(s)\,ds \biggr) \nonumber\\[-8pt]\\[-8pt]
&&{} +Y_3\biggl( k_{\mathrm{fb}} \int
_{0}^{t} \tilde{n}^N(s)\biggl(1-\frac{\tilde{n}^N(s)}{N} \biggr) \,ds
\biggr).\nonumber
\end{eqnarray}
Define
%
%
\begin{equation}
\label{rhoNt}
\tilde{\rho}_{\varepsilon}^N= \inf{ \{t \geq0 \dvtx\tilde{n}^N(t) \geq N
\varepsilon\}}=N \rho_{\varepsilon}^N.
\end{equation}
To prove the first claim of the lemma, we only need to show that
%
%
\begin{equation}
\label{suffc}
\lim_{N \to\infty} P\biggl( \tilde{\rho}_{\varepsilon}^N \leq\frac{2 \log
{N}}{\lambda} \biggr)= 1.
\end{equation}

For $0 \leq t < \tilde{\rho}_{\varepsilon}^N $, $\frac{\tilde{n}^N(t)}{N}
\leq\varepsilon$. Define another process $Z$ by the equation
%
%
\begin{eqnarray}
\label{sdez}
Z(t) &=& Y_1\bigl(k_{\mathrm{on}}(1-\varepsilon)t \bigr)- Y_2 \biggl(
k_{\mathrm{off}} \int_{0}^{t} Z(s)\,ds \biggr) \nonumber\\[-8pt]\\[-8pt]
&&{}+Y_3\biggl( k_{\mathrm
{fb}}(1-\varepsilon) \int_{0}^{t} Z(s)\,ds \biggr).\nonumber
\end{eqnarray}
Note that $Z$ is independent of $N$ and $\varepsilon$ is chosen so that
$k_{\mathrm{fb}}(1-\varepsilon) > k_{\mathrm{off}}$. The form of the
equation for $Z$ shows that $Z$ is a supercritical branching process
with immigration. For $0 \leq t < \tilde{\rho}_{\varepsilon}^N $ we
clearly have $Z(t) \leq\tilde{n}^N(t) < \varepsilon N$.
Define
%
%
\begin{equation}
\label{rhob}
\bar{\rho}_{\varepsilon}^N= \inf{ \{t \geq0 \dvtx Z(t) \geq N
\varepsilon\}}.
\end{equation}
It is easy to see that $\tilde{\rho}_{\varepsilon}^N \leq\bar{\rho
}_{\varepsilon}^N$. We will find a probabilistic upper bound on~$\bar
{\rho
}_{\varepsilon}^N$ which will show (\ref{suffc}) and hence prove the first
claim of the lemma.

A supercritical branching process with immigration can be written as
a~superposition of independent supercritical branching processes starting
with an initial population of $1$ at various times. This fact along
with Theorems 1 and 2 in Chapter~3, Section 7, in Athreya and Ney \cite
{AN} show that there exists a random variable $W$ such that $W>0$ a.s. and
\[
\lim_{t \to\infty} e^{-\lambda t} Z(t) = W \qquad\mbox{a.s.}
\]
Therefore,
\[
\lim_{N \to\infty} e^{-\lambda\bar{\rho}_{\varepsilon}^N} Z(\bar{\rho
}_{\varepsilon}^N) = W \qquad\mbox{a.s.},
\]
which implies that
\[
\lim_{N \to\infty} \log(e^{-\lambda\bar{\rho}_{\varepsilon}^N}
Z(\bar{\rho}_{\varepsilon}^N) )=\lim_{N \to\infty} \bigl(-\lambda
\bar{\rho}_{\varepsilon}^N+\log Z(\bar{\rho}_{\varepsilon}^N) \bigr)= \log W
\qquad\mbox{a.s.}
\]
Observe that $ N \varepsilon\leq Z(\bar{\rho}_{\varepsilon}^N)\leq
N\varepsilon
+1$. From above we get
\[
\lim_{N \to\infty} \frac{\bar{\rho}_{\varepsilon}^N}{\log N} = \frac{1}{
\lambda} \qquad\mbox{a.s.}
\]
Since $N \rho_{\varepsilon}^N=\tilde{\rho}_{\varepsilon}^N \leq\bar{\rho
}_{\varepsilon}^N$ a.s., the above limit implies (\ref{suffc}) and also
shows that $\rho_{\varepsilon}^N \to0$ a.s. as $N \to\infty$. This
completes the proof of the lemma.
\end{pf}

Recall the definition of equilibrium fraction $h_{\mathrm{eq}}$ from
the statement of Theorem~\ref{mainconvthm}. Fix\vadjust{\goodbreak} $\varepsilon$ to be
$\frac
{h_{\mathrm{eq}}}{2} = \frac{1}{2} ( 1-\frac{k_{\mathrm
{off}}}{k_{\mathrm{fb}}} )$ and let $\rho^N$ be $\rho_{\varepsilon
}^N$ for this particular choice of $\varepsilon$.
By Lemma \ref{lem1} we obtain
%
%
\begin{equation}
\label{plimrho}
\lim_{N \to\infty} P\biggl( \rho^N \leq\frac{4 \log{N}}{ (k_{\mathrm
{fb}}-k_{\mathrm{off}})N} \biggr)= 1.
\end{equation}

Recall that the process $h^N$ is given by (\ref{defhn}). Using
(\ref{main}), we can write an equation for $h^N$ as
%
%
\begin{eqnarray}
\label{mainh}\quad
h^N(t) &=& \frac{1}{N} Y_1\biggl( N k_{\mathrm{on}} \int_{0}^{t} \bigl(1 -
h^N(s)\bigr)\,ds \biggr) - \frac{1}{N} Y_2\biggl(N^2 k_{\mathrm{off}} \int
_{0}^{t} h^N(s)\,ds \biggr)\nonumber\\[-8pt]\\[-8pt]
&&{} + \frac{1}{N} Y_3\biggl( N^2 k_{\mathrm{fb}} \int_{0}^{t} h^N(s)
\bigl(1-h^N(s) \bigr) \,ds \biggr). \nonumber
\end{eqnarray}

Let $\bar{h}^N$ be the process given by
%
%
\begin{equation}
\label{barhn}
\bar{h}^N(t) = h^N ( t + \rho^N ).
\end{equation}
Note that
%
%
\begin{equation}
\label{initialvalueofh}
\bar{h}^N(0) = h^N(\rho^N)=\frac{ \lceil N {h_{\mathrm
{eq}}}/{2} \rceil}{N}.
\end{equation}
For $i=1,2,3$ let
\[
\bar{Y}_i (t) = Y_i (t + \delta_i^N ) - Y_i (\delta_i^N
),
\]
where
\begin{eqnarray*}
\delta^N_1 &=& N k_{\mathrm{on}} \int_{0}^{\rho^N} \bigl(1 - h^N(s)\bigr)\,ds,
\\
\delta^N_2 &=& N^2 k_{\mathrm{off}} \int_{0}^{\rho^N} h^N(s)\,ds
\end{eqnarray*}
and
\[
\delta^N_3 = N^2 k_{\mathrm{fb}} \int_{0}^{\rho^N} h^N(s)\bigl(1-h^N(s)
\bigr) \,ds.
\]
From the strong Markov property of the Poisson process we can conclude
that $\bar{Y}_1$, $\bar{Y}_2$ and $\bar{Y}_3$ are independent\vspace*{1pt} unit
Poisson processes.
We can write the equation for process $\bar{h}^N$ as
%
%
\begin{eqnarray}
\label{mainhb}
\bar{h}^N(t) & = & \bar{h}^N(0) + h^N(t+\rho^N) - h^N(\rho^N) \nonumber\\
& = &\bar{h}^N(0) + \frac{1}{N} \bar{Y}_1\biggl( k_{\mathrm{on}}N \int
_{0}^{t} \bigl(1 - \bar{h}^N(s)\bigr)\,ds \biggr)\nonumber\\[-8pt]\\[-8pt]
&&{} - \frac{1}{N} \bar
{Y}_2\biggl(N^2 k_{\mathrm{off}} \int_{0}^{t} \bar{h}^N(s)\,ds
\biggr)\nonumber\\
&&{} + \frac{1}{N} \bar{Y}_3\biggl( N^2 k_{\mathrm{fb}} \int_{0}^{t} \bar
{h}^N(s)\bigl(1-\bar{h}^N(s) \bigr) \,ds \biggr). \nonumber
\end{eqnarray}
Let $\bar{Y}^{c}_i$ be the centered version of $\bar{Y}_i$ [i.e.,
$\bar{Y}^{c}_i(u) = \bar{Y}_i(u)-u$ for $u\geq0$]. Define
%
%
\begin{eqnarray}
\label{mart}\qquad
M_N(t) &=& \frac{1}{N} \bar{Y}^{c}_1\biggl( k_{\mathrm{on}}N \int_{0}^{t}
\bigl(1 - \bar{h}^N(s)\bigr)\,ds \biggr) - \frac{1}{N} \bar{Y}^{c}_2
\biggl(N^2 k_{\mathrm{off}} \int_{0}^{t} \bar{h}^N(s)\,ds
\biggr)\nonumber\\[-8pt]\\[-8pt]
&&{} + \frac{1}{N} \bar{Y}^{c}_3 \biggl( N^2 k_{\mathrm{fb}} \int_{0}^{t}
\bar{h}^N(s)\bigl(1-\bar{h}^N(s) \bigr) \,ds \biggr), \nonumber
\end{eqnarray}
which is a martingale with quadratic variation given by
%
%
\begin{eqnarray}
\label{qv}
[M_N]_t &=& \frac{1}{N^2} \bar{Y}_1 \biggl( k_{\mathrm{on}}N \int_{0}^{t}
\bigl(1 - \bar{h}^N(s)\bigr)\,ds \biggr) + \frac{1}{N^2} \bar{Y}_2
\biggl(N^2 k_{\mathrm{off}} \int_{0}^{t} \bar{h}^N(s)\,ds
\biggr)\hspace*{-28pt}\nonumber\\[-8pt]\\[-8pt]
&&{} + \frac{1}{N^2} \bar{Y}_3\biggl( N^2 k_{\mathrm{fb}} \int_{0}^{t} \bar
{h}^N(s)\bigl(1-\bar{h}^N(s) \bigr) \,ds \biggr). \nonumber
\end{eqnarray}
Since $0 \leq\bar{h}^N\leq1$, we have
%
%
\begin{equation}
\label{eqv}
E([M_N]_t)\leq k_{\mathrm{on}} \frac{t}{N} +k_{\mathrm{off}}t +
k_{\mathrm{fb}}t.
\end{equation}
By centering the Poissons in equation (\ref{mainhb}), we can write
%
%
\begin{eqnarray}
\label{mainhb1}
\bar{h}^N(t)&=&\bar{h}^N(0)+ k_{\mathrm{on}} \int_{0}^{t} \bigl(1 - \bar
{h}^N(s)\bigr)\,ds - N k_{\mathrm{off}} \int_{0}^{t} \bar{h}^N(s)\,ds
\nonumber\\[-8pt]\\[-8pt]
&&{} + N k_{\mathrm{fb}} \int_{0}^{t} \bar{h}^N(s)\bigl(1-\bar{h}^N(s)
\bigr) \,ds +M_N(t). \nonumber
\end{eqnarray}

Let $F(h)=k_{\mathrm{fb}}h(1-h)-k_{\mathrm{off}}h$ and
\[
Z_N(t)=\int_{0}^{t}k_{\mathrm{on}}\bigl(1 - \bar{h}^N(s)\bigr)\,ds + M_N(t).
\]
From (\ref{eqv}) and Corollary 2.3.3 in \cite{Metivier} we can conclude
that $\{Z_N\}$ is a~sequence of semimartingales that is relatively
compact in $D_{\mathbb{R}}[0,\infty)$. The jumps in $Z_N$ are of size
$1/N$ and, hence, if $Z$ is a limit point of this sequence, then~$Z$
must be a continuous process a.s.
equation (\ref{mainhb1}) can be written as
%
%
\begin{equation}
\label{hb2}
\bar{h}^N(t)=\bar{h}^N(0)+ Z_N(t)+ N \int_{0}^{t} F(\bar{h}^N(s))\,ds.
\end{equation}
Define another process $\alpha^N$ by
\[
\alpha^N(t) = \bar{h}^N(t) - h_{\mathrm{eq}}.
\]
Observe that
\begin{eqnarray*}
F(\bar{h}^N(t)) & = & k_{\mathrm{fb}}\bigl(\alpha^N(t) +h_{\mathrm
{eq}}\bigr)\bigl(1-h_{\mathrm{eq}}-\alpha^N(t)\bigr)-k_{\mathrm{off}}\bigl(\alpha
^N(t)+h_{\mathrm{eq}}\bigr) \\
&=&\alpha^N(t) \bigl(k_{\mathrm{fb}}(1-h_{\mathrm{eq}})-k_{\mathrm
{off}}\bigr)+k_{\mathrm{fb}}h_{\mathrm{eq}}(1-h_{\mathrm{eq}}) \\
&&{} -k_{\mathrm{off}}h_{\mathrm{eq}} -k_{\mathrm{fb}}\alpha^N(t)\bigl(\alpha
^N(t) +h_{\mathrm{eq}}\bigr) \\
&=& -k_{\mathrm{fb}}\alpha^N(t) \bar{h}^N(t)\qquad \biggl(
\mbox{using }
h_{\mathrm{eq}}=1-\frac{k_{\mathrm{off}}}{k_{\mathrm{fb}}}\biggr).
\end{eqnarray*}
From (\ref{hb2}) we get
\[
\alpha^N(t)= \alpha^N(0)- N k_{\mathrm{fb}} \int_{0}^{t}\bar
{h}^N(s)\alpha^N(s)\,ds +Z_N(t),
\]
which can be written in differential form as
\[
d \alpha^N(t) + N k_{\mathrm{fb}} \bar{h}^N(t)\alpha^N(t)\,dt = d Z_N(t).
\]
Let $\beta_N(t) = N k_{\mathrm{fb}} \int_{0}^{t}\bar{h}^N(s)\,ds$. Then
\[
d \bigl( \alpha^N(t) e^{\beta_N(t)}\bigr) = e^{\beta_N(t)} \,d Z_N(t).
\]
Integrating from $0$ to $t$, we get
\[
\alpha^N(t) e^{\beta_N(t)} - \alpha^N(0) = \int_{0}^{t}e^{\beta_N(s)} \,d Z_N(s).
\]
Therefore,
%
%
\begin{equation}
\label{alpha1}
\alpha^N(t) - \alpha^N(0) e^{ -\beta_N(t)} = e^{ -\beta_N(t)} \int
_{0}^{t} e^{\beta_N(s)} \,dZ_N(s).
\end{equation}

Let
\[
\bar{\sigma}^N=\inf\biggl\{ t \geq0\dvtx\bar{h}^N(t) \leq\frac
{h_{\mathrm{eq}}}{4} \biggr\}.
\]
For $0\leq t \leq\bar{\sigma}^N$
\[
\bar{h}^N(t) \geq\frac{h_{\mathrm{eq}}}{4} - \frac{1}{N}
\]
and, hence, for any small positive number $\varepsilon$,
\[
\inf_{0\leq t \leq(T \wedge\bar{\sigma}^N -\varepsilon) } | \beta
_{N}(t+\varepsilon) - \beta_{N}(t) | \Rightarrow\infty
\]
as $N \to\infty$. Fix any $T>0$. By Lemma 5.2 in \cite{kat},
\[
\sup_{0 \leq t \leq T \wedge\bar{\sigma}^N } e^{ -\beta_N(t)} \biggl|
\int_{0}^{t} e^{\beta_N(s)} \,d Z_N(s)\biggr| \Rightarrow0.
\]
Hence, (\ref{alpha1}) gives
%
%
\begin{equation}
\label{alpha2}
\sup_{0 \leq t \leq T \wedge\bar{\sigma}^N } \bigl| \alpha^N(t) -
\alpha^N(0) e^{ -\beta_N(t)} \bigr| \Rightarrow0.
\end{equation}
If $\bar{\sigma}^N < T$, then from the definitions of $\bar{\sigma}^N$,
$\alpha^N$ and (\ref{initialvalueofh}) we get
\begin{eqnarray*}
\sup_{0 \leq t \leq T \wedge\bar{\sigma}^N } \bigl| \alpha^N(t) -
\alpha^N(0) e^{ -\beta_N(t)} \bigr| & \geq &\bigl| \alpha^N(\bar{\sigma
}^N) - \alpha^N(0) e^{ -\beta_N(\bar{\sigma}^N)} \bigr| \\[-2pt]
& \geq &\biggl| - \frac{3 h_{\mathrm{eq}}}{4} + \frac{h_{\mathrm
{eq}}}{2} e^{ -\beta_N(\bar{\sigma}^N)} \biggr| -\frac{2}{N} \\[-2pt]
& \geq &\frac{h_{\mathrm{eq}}}{4} - \frac{2}{N}.
\end{eqnarray*}
This calculation shows that
\[
P ( \bar{\sigma}^N < T ) \leq P \biggl( \sup_{0 \leq t \leq T
\wedge\bar{\sigma}^N } \bigl| \alpha^N(t) - \alpha^N(0) e^{ -\beta
_N(t)} \bigr| \geq\frac{h_{\mathrm{eq}}}{4} - \frac{2}{N} \biggr).
\]
Therefore, from (\ref{alpha2}), $P ( \bar{\sigma}^N < T )
\to0$ for any $T>0$ and, hence, $\bar{\sigma}^N \to\infty$ in probability.

Let the stopping time $\tau_N$ be given by
%
%
\begin{equation}
\label{deftaun}
\tau_N = \rho^N + \frac{\log N}{N}.
\end{equation}
From Lemma \ref{lem1} $\rho^N \to0 $ a.s. and, hence, $\tau_N \to0 $
a.s. Let the process $\hat{h}^N$ be defined by
%
%
\begin{equation}
\label{hath}
\hat{h}^N (t) = h^N(t+\tau_N) = \bar{h}^N\biggl(t+\frac{\log
N}{N}\biggr),\qquad t \geq0.
\end{equation}
Note that
\begin{eqnarray*}
&& \sup_{0 \leq t \leq( (T \wedge\bar{\sigma}^N) - ({\log
N})/{N} ) } | \hat{h}^N (t)- h_{\mathrm{eq}} | \\[-2pt]
&&\qquad = \sup
_{0 \leq t \leq( (T \wedge\bar{\sigma}^N) - ({\log N})/{N}
)} \biggl| \alpha^N\biggl(t+ \frac{\log N}{N}\biggr) \biggr| \\[-2pt]
&&\qquad \leq\sup_{0 \leq t \leq((T \wedge\bar{\sigma}^N) -
({\log N})/{N} ) } \biggl| \alpha^N\biggl(t+ \frac{\log N}{N}\biggr) -
\alpha^N(0) e^{ -\beta_N(t+({\log N})/{N})} \biggr|\\[-2pt]
&&\qquad\quad{} + \sup_{0 \leq t \leq((T \wedge\bar{\sigma}^N) - ({\log
N})/{N} )} \bigl| \alpha^N(0) e^{ -\beta_N(t+({\log
N})/{N})} \bigr|.
\end{eqnarray*}
The first term converges to $0$ in probability due to (\ref{alpha2}).
Observe that for $0 \leq t \leq((T \wedge\bar{\sigma}^N) - \frac
{\log N}{N} )$,
\[
\beta_N\biggl(t+\frac{\log N}{N}\biggr) \geq N k_{\mathrm{fb}} \int
_{0}^{t+({\log N})/{N}}\bar{h}^N(s)\,ds = k_{\mathrm{fb}} \biggl( \frac
{h_{\mathrm{eq}}}{4}-\frac{1}{N} \biggr)\log N .
\]
Thus, the second term above converges to $0$ as $N \to\infty$ a.s.
Since $\log N/N \to0$ and $\bar{\sigma}^N \to\infty$, we get
\[
\sup_{0 \leq t \leq T } | \hat{h}^N (t)- h_{\mathrm{eq}} |
\Rightarrow0 .
\]
This proves part (A) of Theorem \ref{mainconvthm}.\vadjust{\goodbreak}

\subsection{\texorpdfstring{Proof of part \textup{(B)} of Theorem \protect\ref{mainconvthm}}%
{Proof of part (B) of Theorem 2.3}}
To prove part (B) of the theorem, we will use the particle construction
introduced by Donnelly and Kurtz in \cite{DK99}. In this construction
the molecules are arranged in levels which are indexed by positive
integers. The arrangement is such that for any positive integer~$n$,
the process determined by the first $n$ levels is embedded in the
process determined by the first $(n+1)$ levels. This allows us to pass
to the projective limit. Another advantage of this construction is that
it makes the ancestral relationships between molecules explicit. For
any set of molecules we can trace back their genealogical tree to
obtain results about the measure-valued process.

We first motivate the particle construction. Suppose the total
population size is $N$ and at any time $t$ there are $n^N(t)$ molecules
on the membrane. The process $n^N$ follows equation (\ref{main}) and
suppose its evolution is known. Each molecule has a type in $E \times
[0,1]$ as before. We can represent the population on the membrane at
time $t$ by a vector $(Y^N_1(t),Y^N_2(t),\ldots,Y^N_{n^N(t)}
)$. Since the labeling of the molecules is arbitrary, it contains
exactly the same information as the measure $\tilde{Z}(t)=\sum
_{i=1}^{n} \delta_{Y_i(t)}$. We can choose any labeling we find
convenient. So we look into the future and order the individuals
according to the time of survival of their lines of descent. In this
new ordering we arrange the molecules into \textit{levels}, which are
taken to be positive integers. At any time~$t$, if there are $n^N(t)$
molecules, we will represent the population as the vector $
(X^N_1(t),X^N_2(t),\ldots,X^N_{n^N(t)})$. We will\vspace*{-1pt} refer
to~$X^N_i$ as the $i$th level process, where $X^N_i(t) \in E \times[0,1]$
is the molecule type at level $i$ at time $t$. Molecules are allowed to
change levels with time.
If a~death happens at time~$t$, then $n^N(t)=n^N(t-)-1$ and we just
remove the molecule at the highest index $n^N(t)$. If an immigration
happens at time~$t$, then $n^N(t)=n^N(t-)+1$ and we uniformly select a
level from the first $n^N(t-)+1$ levels and insert the immigrant
molecule there. If a birth event happens at time $t$, then
$n^N(t)=n^N(t-)+1$ and we do the following. We first uniformly select
two levels $i$ and $j$ from the first $n^N(t-)+1$ levels. Suppose $i$
is the smaller of the two levels. Then we shall refer to the molecule
$X^N_i(t-)$ as the parent and insert a copy of it at level $j$. The
molecules $\{ X^N_k(t-) \dvtx k= j ,j+1,\ldots\}$ are shifted up by one
level. So at time $t$, the offspring molecule $X^N_j(t)$ is a copy of
$X^N_i(t-)$, while $X^N_k(t) = X^N_k(t-)$ for $k<j$ and $X^N_k(t) =
X^N_{k-1}(t-)$ for $k>j$. In between all these events molecules are
doing speed $D$ Brownian motion on $E$ and changing their location.

What we have described above is a Markov process $X^N$ with state space
\[
S^N=\bigcup_{n=0}^{N} ( E \times[0,1] )^n.
\]
We adopt the convention that $( E \times[0,1] )^{0}=\{
\triangle\}$. For $x\in S^N$, if $x \in( E \times[0,1]
)^n$, then let $|x|=n$ for any $n=0,1,\ldots,N$. If at time $t$,
$X^N(t) = x\in S^N$ and $|x|=n$, then it means that there are $n$
molecules on the membrane with the type vector $x=(x_1,x_2,\ldots,x_n)
\in( E \times[0,1] )^n$.

If $|x|=n \geq m$ and $x=(x_1,x_2,\ldots,x_n)$, then let
$x^{|m}=(x_1,x_2,\ldots,x_m)$. Any $f \in B((E \times
[0,1])^m)$ can be regarded as a function over $S^N$ by defining
it as $f(x)=0$ if $|x|<m$ and $f(x)=f(x^{|m})$ if $|x|\geq m$. We now
specify the generator $A^N$ of the Markov process $X^N$ by its action
on functions in its domain $\mathcal{D}(A^N)=\mathcal{C}$ (see
Definition \ref{defc2}) as
%
%
\begin{eqnarray}
\label{pAN}\quad
A^N f(x) &= &\frac{D}{2} \sum_{i=1}^n \Delta_{i} f(x) + n N
k_{\mathrm{off}} \bigl(f(d_n(x)) - f(x)\bigr) \nonumber\\
&&{} + k_{\mathrm{on}} \biggl( \frac{N-n}{n+1} \biggr) \sum
_{i=1}^{n+1} \int_{E} \int_{0}^{1} \bigl( f( \theta_{i} (x ,(y,r) ) )-f(x)\bigr)
\vartheta(dy)\,dr \\
&&{} + 2 k_{\mathrm{fb}} \biggl( \frac{N-n}{n+1} \biggr) \sum
_{1 \leq i < j \leq(n+1)} \bigl(f( \theta_{ij} (x))-f(x)\bigr), \nonumber
\end{eqnarray}
where $n=|x|$ and if $x=(x_1,x_2,\ldots,x_n)$, then
$d_i(x)=(x_1,x_2,\ldots,x_{i-1},x_{i+1},\allowbreak\ldots,x_{n})$
(remove the $i$th coordinate), $\theta_{ij}(x)= (x_1,\ldots
,x_{j-1},x_{i},x_{j},\ldots,x_{n})$ (insert a copy of $x_i$ at the
$j$th place) and $\theta_{i}(x ,(y,r))=(x_1,\ldots
,x_{i-1},(y,r),\allowbreak x_{i},\ldots,x_{n})$ [insert $(y,r)$ at the $i$th place].

Viewing the operator $A^N$ as a bounded perturbation of the diffusion
operator [given by the first term on the right of (\ref{pAN})], we can
argue that the martingale problem for $A^N$ is well posed in the same
way we argued for~$\mathbb{A}^N$ in Section \ref{sec2}.
We now relate any solution of the
martingale problem for~$A^N$ to a solution of the martingale problem
for $\mathbb{A}^N$ [see (\ref{genpm1})] by using the Markov mapping
theorem (see Theorem 2.7 in Kurtz \cite{K98}). Let
\[
S^N_0=\mathcal{M}^N_a(E \times[0,1])=\Biggl\{ \frac{1}{N} \sum
_{i=1}^{n} \delta_{x_i} \dvtx0 \leq n \leq N \mbox{ and } x_1,\ldots
,x_n \in E \times[0,1] \Biggr\}
\]
and
\[
S^N=\bigcup_{n=0}^{N} ( E \times[0,1] )^n
\]
as before. Define $\gamma\dvtx S^N \to S^N_0$ by
\[
\gamma(x)=\frac{1}{N} \sum_{i=1}^{n} \delta_{x_i} \qquad\mbox{if }
x=(x_1,x_2,\ldots,x_n).
\]
Define the transition function $\alpha\dvtx S^N_0 \to\mathcal{P}(
S^N )$ by
\[
\alpha\Biggl( \frac{1}{N} \sum_{i=1}^n \delta_{x_i},dz \Biggr) = \frac
{1}{n!} \sum_{\sigma\in\Sigma_n} \delta_{(x_{\sigma(1)},x_{\sigma
(2)},\ldots,x_{\sigma(n)}) } \,dz,
\]
where $\Sigma_n$ is the set of all permutations on
$\{1,2,\ldots,n\}$.\vadjust{\goodbreak}
\begin{lemma}
\label{usingmm}
Let\vspace*{-1pt} $\pi^N_0 \in\mathcal{P}(S^N_0)$ and define $\pi^N=\int_{S^N_0}
\alpha(y,\cdot) \pi^N_0(dy)$. If $\nu^N$ is the solution of the
martingale problem for $(\mathbb{A}^N,\pi^N_0)$ and $X^N$ is the
solution of the martingale problem for $(A^N,\pi^N)$, then $\gamma
(X^N)$ and $\nu^N$ have the same distribution in $D_{\mathcal{M}^N_a(E
\times[0,1])}[0,\infty)$. Furthermore, for any $t \geq0$ the
distribution of $X^N(t) = (X^N_1(t),X^N_2(t),\ldots)$ is
exchangeable.
\end{lemma}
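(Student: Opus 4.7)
My plan is to invoke the Markov Mapping Theorem (Theorem 2.7 of Kurtz \cite{K98}) with the projection $\gamma\colon S^N\to S^N_0$ and the permutation kernel $\alpha$ as defined in the statement. The hypotheses to verify are: (i) $\alpha$ is a transition function from $S^N_0$ into $\mathcal{P}(S^N)$ concentrated on $\gamma^{-1}(y)$, which is immediate since each ordering of the atoms of $y$ is sent back to $y$ by $\gamma$; (ii) for every $F\in\bar{\mathcal{C}}$ the lifted function $\bar F:=F\circ\gamma$ lies in $\mathcal{D}(A^N)=\mathcal{C}$, which follows from the explicit formula (\ref{mum}) expressing $\gamma(x)^{(m)}$ as a symmetric sum of $C^2$ functions in $x$; and (iii) the intertwining identity
\[
\int_{S^N} A^N\bar F(x)\,\alpha(y,dx) \;=\; \mathbb{A}^N F(y), \qquad y\in S^N_0,\; F\in\bar{\mathcal{C}}.
\]

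\textbf{Checking the intertwining term by term.} The key computation is to match the four terms of $A^N$ against the four terms of $\mathbb{A}^N$ after integrating against $\alpha$. Because $\bar F$ is symmetric in its $n$ entries, the diffusion piece $\tfrac{D}{2}\sum_{i=1}^n \Delta_i \bar F$ matches the first term of $\mathbb{A}^N$ directly from the definition of $\mu^{(m)}$. For death, the operator $A^N$ removes the last level only, but symmetry of $\bar F$ gives $\bar F(d_n(x))=\bar F(d_i(x))$ for every $i$, so the rate $nN k_{\text{off}}$ converts into the symmetric sum $k_{\text{off}}N\sum_i \bigl(F(\mu-\tfrac{1}{N}\delta_{x_i})-F(\mu)\bigr)$ that appears in $\mathbb{A}^N$. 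Immigration is analogous: the factor $(n+1)^{-1}\sum_{i=1}^{n+1}$ collapses under symmetry to a single insertion, reproducing the immigration term. The subtle case is birth: the sum $\sum_{1\le i<j\le n+1}\bigl(\bar F(\theta_{ij}(x))-\bar F(x)\bigr)$ inserts a copy of $x_i$ with weight $(n+1-i)$, and these weights are not symmetric in $i$. After averaging against $\alpha(y,\cdot)$, however, a uniform random permutation makes the entry in position $i$ uniformly distributed over the atoms of $y$, so the weights average to $\tfrac{n+1}{2}$ on each atom; together with the prefactor $2k_{\text{fb}}(N-n)/(n+1)$ this yields exactly $k_{\text{fb}}N^2(1-h)\int \bigl(F(\mu+\tfrac{1}{N}\delta_x)-F(\mu)\bigr)\,\mu(dx)$, the birth term of $\mathbb{A}^N$.

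\textbf{Conclusion and exchangeability.} Granted the intertwining identity, the Markov Mapping Theorem concludes that if $X^N$ solves the martingale problem for $(A^N,\pi^N)$ with $\pi^N=\int\alpha(y,\cdot)\,\pi^N_0(dy)$, then $\gamma(X^N)$ solves the martingale problem for $(\mathbb{A}^N,\pi^N_0)$; the initial distribution is correct because $\alpha(y,\gamma^{-1}\{y\})=1$. The well-posedness of the $\mathbb{A}^N$ martingale problem noted after (\ref{genpm1}) then forces $\gamma(X^N)$ and $\nu^N$ to have the same law in $D_{\mathcal{M}^N_a(E\times[0,1])}[0,\infty)$. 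The same theorem furnishes the stronger statement that, for every $t\ge 0$, the conditional law of $X^N(t)$ given $\gamma(X^N(t))=y$ equals $\alpha(y,\cdot)$, namely the uniform measure over orderings of the atoms of $y$; this conditional law is exchangeable, so $X^N(t)=(X^N_1(t),X^N_2(t),\dots)$ is exchangeable unconditionally as well. The main obstacle is the birth computation above: the level-based parenting rule is intrinsically asymmetric because the smaller index is always the parent, and it is precisely the averaging against the uniform permutation kernel that restores the symmetry needed to recover the measure-valued birth operator.
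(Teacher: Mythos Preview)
Your proposal is correct and follows essentially the same route as the paper: verify the intertwining identity $\int A^N f\,d\alpha(\cdot,\cdot)=\mathbb{A}^N F$ term by term and then invoke the Markov Mapping Theorem (Theorem~2.7 and Corollary~3.5 of Kurtz~\cite{K98}) to obtain both the equality in law of $\gamma(X^N)$ and $\nu^N$ and the exchangeability of $X^N(t)$. The only cosmetic difference is that the paper starts from a not-necessarily-symmetric $f\in\mathcal{C}$ and averages it via $\alpha$ to produce $F(\mu)=\langle f,\mu^{(m)}\rangle$, proving the combinatorial identities (\ref{immi})--(\ref{death}) directly, whereas you start from $F\in\bar{\mathcal{C}}$ and lift to the symmetric $\bar F=F\circ\gamma$, which makes the death and immigration matches immediate but forces the averaging argument you describe for the asymmetric birth weights; both computations arrive at the same identity.
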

\begin{remark}
The length of the vector $X^N(t)$ is $n^N(t)$, which is a~\mbox{random}
variable. When\vspace*{1pt} we say that the distribution of $X^N(t) =
(X^N_1(t),X^N_2(t),\ldots)$ is exchangeable we mean that given
$n^N(t)=n$, the distribution of $(X^N_1(t),\allowbreak X^N_2(t),\ldots
,X^N_n(t) )$ is exchangeable.
\end{remark}
\begin{pf*}{Proof of Lemma \ref{usingmm}}
The definition of $\alpha$ ensures that $\alpha(\mu,\gamma^{-1}(\mu
))=1$ for all $\mu\in S^N_0$. If $f \in\mathcal{C}\cap C ((
E \times[0,1] )^{m} )$ and $\mu=\frac{1}{N} \sum_{i=1}^n
\delta_{x_i}$, then let
%
%
\begin{equation}\qquad
F(\mu)=\int_{S^N} f(z)\alpha(\mu,dz) = \frac{1}{n!} \sum_{\sigma\in
\Sigma_n} f\bigl(x_{\sigma(1)},\ldots,x_{\sigma(n)}\bigr)= \bigl\langle f,\mu
^{(m)} \bigr\rangle.
\end{equation}
Hence, $F \in\bar{\mathcal{C}}=\mathcal{D}(\mathbb{A})$. Now we show
that for such a function $F$
%
%
\begin{equation}
\label{mmrelation}
\mathbb{A}^N F(\cdot)=\int_{S^N}A^N f(z)\alpha(\cdot,dz).
\end{equation}
On writing down the expressions for $\mathbb{A}^N$ and $A^N$ using (\ref
{genpm1}) and (\ref{pAN}), we observe that there are four terms on each
side of (\ref{mmrelation}). We will show that the equality holds term
by term. It is easy to see that the first term corresponding to the
Brownian diffusion of membrane molecules is equal on both sides. We
check the equality for the next three terms below.

For $x=(x_1,\ldots,x_n)$ let $\sigma(x) =(x_{\sigma(1)},x_{\sigma
(2)},\ldots,x_{\sigma(n)})$ for any $\sigma\in\Sigma_n$. Let
$\mu=\frac{1}{N} \sum_{i=1}^n \delta_{x_i}$. Then
%
%
\begin{eqnarray}
\label{immi}
F\biggl(\mu+\frac{1}{N} \delta_{(y,r)} \biggr) &=& \frac{1}{(n+1)!} \sum
_{\sigma\in\Sigma_{n+1}} f( \sigma( \theta_{n+1} (x,(y,r))
) ) \nonumber\\[-9.5pt]\\[-9.5pt]
&=& \frac{1}{n+1} \sum_{i=1}^{n+1} \frac{1}{n!} \sum_{\sigma\in\Sigma
_n} f(\theta_{i} (\sigma(x),(y,r))).\nonumber
\end{eqnarray}
Similarly,
%
%
\begin{eqnarray}
\label{birth}\quad
N \int_{E \times[0,1]} F \biggl( \mu+\frac{1}{N} \delta_{x} \biggr)\mu
(dx)&=& \sum_{i=1}^{n} \frac{1}{(n+1)!} \sum_{\sigma\in\Sigma_{(n+1)}}
f( \sigma( \theta_{i n^{+}}(x) ) ) \nonumber\\[-2pt]
&=& \frac{1}{(n+1)!} \sum_{i=1}^{n} \sum_{j \neq i} \sum_{\sigma\in
\Sigma_n} f(\theta_{ij} (\sigma(x))) \\[-2pt]
&=& \frac{2}{n+1} \sum_{1 \leq i < j \leq(n+1)} \frac{1}{n!} \sum
_{\sigma\in\Sigma_n} f(\theta_{ij} (\sigma(x))),\nonumber
\end{eqnarray}
where $n^{+} = (n+1)$ in the first equation above.
Finally,
%
%
\begin{eqnarray}
\label{death}
N \int_{E \times[0,1]} F \biggl( \mu-\frac{1}{N} \delta_{x} \biggr)\mu
(dx)& =& \sum_{i=1}^{n} \frac{1}{(n-1)!} \sum_{\sigma\in\Sigma_{n-1}}
f( \sigma( d_i(x) ) ) \nonumber\\
&=& \frac{1}{(n-1)!} \sum_{\sigma\in\Sigma_n} f( \sigma(
d_n(x) ) ) \\
&=&n \frac{1}{n!} \sum_{\sigma\in\Sigma_n} f( \sigma(
d_n(x) ) ).\nonumber
\end{eqnarray}
Equations (\ref{immi}), (\ref{birth}) and (\ref{death}) show that the
relation (\ref{mmrelation}) holds and so the Markov mapping theorem is
applicable. Therefore, we can conclude that $\gamma(X^N)$ and $\nu^N$
have the same distribution in $D_{\mathcal{M}^N_a(E \times
[0,1])}[0,\infty)$.
From Corollary 3.5 in Kurtz \cite{K98} we obtain that if $n^N(t)=n$, then
\[
E(f(X^N_1(t),X^N_2(t),\ldots,X^N_{n}(t))|\mathcal{F}_{t}
)=\int_{S^N} f(z) \alpha(\gamma(X^N(t)),dz),
\]
where $\{\mathcal{F}_{t}\}$ is the filtration generated by the process
$\gamma(X^N(\cdot))$. Since $\alpha$ is symmetric, the distribution of
$(X^N_1(t),X^N_2(t),\ldots)$ is exchangeable.
\end{pf*}

Recall from Section \ref{sec2} that $\bar{\pi}_0 \in\mathcal{P}(\mathcal
{M}^N_{a}(E \times[0,1]))$ is the distribution that puts all the
mass at the $0$ measure and $\mu^N$ [given by (\ref{defmun})] is the
unique solution to the martingale problem\vspace*{1pt} corresponding to $
(\mathbb{A}^N,\bar{\pi}_0)$. For $\tau_N$ given by (\ref
{deftaun}), define the process $\hat{\mu}^N$ by
%
%
\begin{equation}
\label{defhatmun}
\hat{\mu}^N(t) = \mu^N ( t+\tau_N) ,\qquad t \geq0.
\end{equation}
Also let
%
%
\begin{equation}
\label{defhatn}
\hat{n}^N(t) = N \hat{h}^N(t) = n^N ( t+\tau_N),\qquad t \geq0.
\end{equation}
Let $\hat{\pi}^N_0 \in\mathcal{P}(\mathcal{M}^N_a(E \times[0,1]))$ be
the distribution of $\mu^N(\tau_N)=\hat{\mu}^N(0)$ and define $\pi^N
\in\mathcal{P}(S^N)$ by $\pi^N=\int_{S^N_0} \alpha(y,\cdot) \hat{\pi
}^N_0(dy)$. Let $X^N$ be the unique solution to the martingale problem
for $(A^N,\pi^N)$. Note that $X^N$ lives in the space
$S^N=\bigcup_{n=0}^{N} ( E \times[0,1] )^n$ and for any
$t\geq0$, $|X^N(t)|=\hat{n}^N(t)$. The process~$\hat{h}^N$ converges
to $h_{\mathrm{eq}}$ uniformly over compact time intervals [from part~(A) of Theorem \ref{mainconvthm}]. Hence, $\hat{n}^N$ converges to
$\infty$ uniformly over compact time intervals as well.

For part (B) of Theorem \ref{mainconvthm} we assume that the sequence
of random variables $\{\hat{\mu}^N(0)\}$ converges in distribution to
$\mu(0)$ as $N \to\infty$. Let $\hat{\pi}_0 \in\mathcal{P} (
\mathcal{M}_1(E \times[0,1]) )$ be the distribution of $\mu(0)$
and $\pi_0 \in\mathcal{P} ( \mathcal{P}(E \times[0,1]) )$
be the distribution of $\mu(0)/h_{\mathrm{eq}}$. Our assumption implies
that $\hat{\pi}^N_0$ converges weakly to $\hat{\pi}_0$. Due to part (A)
of Theorem \ref{mainconvthm}, this is equivalent to saying that the
distributions of $\mu^N(\tau_N)/h^N(\tau_N)$ converge weakly to $\pi_0$.

Now sample a probability measure $\mu$ from $\pi_0$ and let $
(Y_1,Y_2,\ldots)$ be an infinite sequence of exchangeable random
variables with de Finetti measure~$\mu$. Let $\pi\in\mathcal{P}((E
\times[0,1])^{\infty})$ be the corresponding distribution of
$(Y_1,Y_2,\ldots)$. Since \mbox{$\hat{\pi}^N_0 \Rightarrow\hat{\pi}_0$}, we
also have $\pi^N \Rightarrow\pi$.

From now on consider $X^N$ as a process over $(E \times[0,1])^{\infty
}$ in which the components greater than $N$ do not vary. The space $(E
\times[0,1])^{\infty}$ is given the usual product topology.

We can regard any function $f \in\mathcal{C} \cap B((E \times
[0,1])^{m})$ as a function over $(E \times[0,1])^{\infty}$ by defining
it as $f(x)=f(x^{|m})=f(x_1,\ldots,x_m)$ for any $x \in(E \times
[0,1])^{\infty}$. By the definition of $X^N$, for any $f \in\mathcal
{C} \cap B((E \times[0,1])^{m})$,
%
%
\begin{equation}
\label{mgrelcompxn}
M^N_{X,f}(t) = f(X^N(t))-\int_{0}^{t} A^N f(X^N(s))\,ds
\end{equation}
is a martingale. Define another process
%
%
\begin{equation}
\label{defhatzn}
\hat{Z}_N (t) = \gamma(X^N(t)) = \frac{1}{N} \sum_{i=1}^{\hat{n}^N(t)}
\delta_{X^N_i(t)},\qquad t\geq0.
\end{equation}
From Lemma \ref{usingmm}, the process $\hat{Z}_N$ has the same
distribution as the process~$\hat{\mu}^N$. Hence, if $F \in\bar
{\mathcal{C}}$ is given by $F(\mu)= \langle f,\mu^{(m)}
\rangle$, then
%
%
\begin{equation}
\label{mgrelcompzn}
M^N_{ \hat{Z},F} = F(\hat{Z}^N (t))-\int_{0}^{t} \mathbb{A}^N F(\hat
{Z}^N (s))\,ds
\end{equation}
is also a martingale. If $|X^N(t)|=\hat{n}^N(t)>m$, then
\begin{eqnarray*}
&&
A^N f(X^N(t))\\
&&\qquad = \frac{D}{2} \sum_{i=1}^m \Delta_{i} f(x)
\\
&&\qquad\quad{} + 2 k_{\mathrm{fb}} \biggl( \frac{N-\hat{n}^N(t)}{\hat{n}^N(t)+1}
\biggr) \sum_{1 \leq i < j \leq m} \bigl(f( \theta_{ij} (X^N(t)))-f(x)\bigr) \\
&&\qquad\quad{} + k_{\mathrm{on}} \biggl( \frac{N-\hat{n}^N(t)}{\hat{n}^N(t)+1}
\biggr) \sum_{i=1}^{m} \int_{E} \int_{0}^{1} \bigl( f( \theta_{i}
(X^N(t) ,(y,r) ) )-f(x)\bigr) \vartheta(dy)\,dr \nonumber.
\end{eqnarray*}
The \textit{death} term drops out because only the molecule at the
highest level is allowed to die and $f$ depends on only the first $m$
levels. From above it can be easily seen that for any positive integer
$m$ and $f \in\mathcal{C} \cap B((E \times[0,1])^{m})$, the supremum
of the process $A^N f(X^N(\cdot))$ over compact time intervals stays
bounded as $N \to\infty$. Using this fact along with (\ref
{mgrelcompxn}), (\ref{mgrelcompzn}) and (\ref{mmrelation}), it is easy
to argue that the sequence of processes $\{(X^N,\hat{Z}_N)\}$ is
relatively compact in $D_{(E \times[0,1])^{\infty}\times\mathcal
{M}_{1}(E \times[0,1])}[0,\infty)$ (see Corollary 9.3 and Theorem 9.4
in Ethier and Kurtz \cite{EK}). Suppose $(X,\hat{Z})$ is any limit
point and $(X^N,\hat{Z}_N) \Rightarrow(X,\hat{Z})$ along the
subsequence $k_N$. By the continuous mapping theorem and the
boundedness of $f\in\mathcal{C} \cap B((E \times[0,1])^{m})$, the
sequence of martingales $\{M^N_{X,f}(t) \}$ converges along the
subsequence $k_N$ to
%
%
\begin{equation}
\label{martmx}
M_{X,f}(t) = f(X(t))-\int_{0}^{t} A_m f(X(s))\,ds,
\end{equation}
which is a martingale with respect to the filtration generated by $X$.
The operator $A_m$ is given by
%
%
\begin{eqnarray}
\label{limAm}\quad
A_m f(x) & = & \frac{D}{2} \sum_{i=1}^m \Delta_{i} f(x) + 2 k_{\mathrm
{fb}} \biggl( \frac{1-h_{\mathrm{eq}}}{h_{\mathrm{eq}}} \biggr) \sum_{1
\leq i < j \leq m} \bigl(f( \theta_{ij} (x))-f(x)\bigr)
\nonumber\\[-8pt]\\[-8pt]
&&{}+ k_{\mathrm{on}} \biggl( \frac{1-h_{\mathrm{eq}}}{h_{\mathrm{eq}}}
\biggr) \sum_{i=1}^{m} \int_{E} \int_{0}^{1} \bigl( f( \theta
_{i} (x ,(y,r) ) )-f(x)\bigr) \vartheta(dy)\,dr \nonumber
\end{eqnarray}
for any $f \in\mathcal{D }(A_m)=\mathcal{C} \cap B((E
\times[0,1])^{m})$. The operator $A_m$ is the generator for the
process determined by the first $m$ levels of the limiting process $X$.
We can easily check that the martingale problem for $A_m$ is well posed
due to the same reasons that were given for $A^N$. Taking $\mathcal
{D}(A) = \bigcup_{m=1}^{\infty}\mathcal{D}(A_m)$ and defining
$A f = A_m f$ if $f \in\mathcal{D }(A_m)$, we see that the
martingale problem for $A$ is well posed. The distribution of $X^N(0)$
(denoted by $\pi^N$) converges to $\pi$. From (\ref{martmx}) we know
that for any positive integer $m$, the process followed by the first
$m$ levels of $X$ has generator~$A_m$. Hence, $X$ is the unique
solution to the martingale problem corresponding to $(A,\pi)$.

Let
\[
\gamma_N = \inf\{ t \geq0 \dvtx\hat{n}^N(t) =0 \}
\]
and for any $0 \leq t < \gamma_N$ define
%
%
\begin{equation}
\label{definettixn}
Z_N(t) = \frac{1}{\hat{n}^N(t)} \sum_{k=1}^{\hat{n}^N(t)} \delta_{X^N_k(t)}.
\end{equation}
Observe that
\[
\hat{Z}_N(t) = \frac{1}{N} \sum_{k=1}^{\hat{n}^N(t)} \delta_{X^N_k(t)}
=\biggl( \frac{\hat{n}^N(t)}{N} \biggr) \Biggl(\frac{1}{\hat{n}^N(t)} \sum
_{k=1}^{\hat{n}^N(t)} \delta_{X^N_k(t)}\Biggr) = \hat{h}^N(t) Z_N(t).
\]
The process $\hat{h}^N$ converges to the constant process $h_{\mathrm
{eq}}$. Therefore, $\gamma_N \to\infty$ in probability and for any
$t\geq0$, $\hat{n}^N(t) \to\infty$ in probability. Define $Z$ to be
the process
\[
Z(t) = \frac{\hat{Z}(t)}{h_{\mathrm{eq}}},\qquad t\geq0.
\]
Then $Z(0)$ has distribution $\pi_0$ and since $(X^N,\hat{Z}_N)
\Rightarrow(X,\hat{Z})$ along the subsequence $k_N$, we must have that
$(X^N,Z_N) \Rightarrow(X,Z)$ along the same subsequence. Notice that
for any $t$, $Z_N(t) \Rightarrow Z(t)$ implies that $Z_N^{(m)}(t)
\Rightarrow Z^{m}(t)$.
From the exchangeability of $X^N(t)$ we get that for any $f \in\mathcal
{C} \cap B((E \times[0,1])^{m})$,
\[
E ( f(X^N_1(t),\ldots,X^N_m(t)) ) = E \bigl( \bigl\langle f,
\hat{Z}_N^{(m)}(t) \bigr\rangle\bigr) = E \bigl( \bigl\langle f,
Z_N^{(m)}(t) \bigr\rangle\bigr)
\]
for any $0 \leq t < \gamma_N$.
Passing to the limit along the subsequence $k_N$, we obtain
\[
E ( f(X_1(t),\ldots,X_m(t)) ) = E ( \langle f,
Z^{m}(t) \rangle)
\]
for any $t \geq0$. It shows that conditional on $Z(t)$, $X_1(t),
X_2(t), \ldots$ are i.i.d. random variables with distribution $Z(t)$.
Hence, for any $t \geq0$, $X(t)$ is exchangeable with de Finetti
measure $Z(t)$. Therefore,
%
%
\begin{equation}
\label{definettix}
Z(t) = \lim_{n \to\infty} \frac{1}{n} \sum_{i=1}^{n}
\delta_{X_i(t)}\qquad
\mbox{a.s.}
\end{equation}
and if $\{\mathcal{F}^Z_t\}$ is the filtration generated by the process
$Z$, then
\[
E ( f(X_1(t),\ldots,X_m(t)) | \mathcal{F}^Z_t ) =
\langle f, Z^{m}(t) \rangle.
\]
Conditioning (\ref{martmx}) with respect to $\{\mathcal{F}^Z_t\}$, we
obtain that
\[
\langle f, Z^{m}(t) \rangle- \int_{0}^{t} \langle A_m
f,Z^{m}(s) \rangle \,ds
\]
is a martingale. If we define an operator $\mathbb{A}$ by
\[
\mathbb{A} F (\mu) = \langle A_m f,\mu^{m}\rangle
\]
for $F(\mu) = \langle f, \mu^{m} \rangle$, then this
definition agrees with the definition of the Fleming--Viot generator
$\mathbb{A}$ given in Section \ref{sec2} by (\ref{gena}). The
martingale problem
for $\mathbb{A}$ is well posed and, hence, $Z$ is the unique solution
to the martingale problem for $(\mathbb{A},\pi_0)$.

From the discussion above it is clear that $(X^N,\hat{Z}_N) \Rightarrow
(X,\hat{Z})$, where $\hat{Z} = h_{\mathrm{eq}} Z$ and $Z$ is a
Fleming--Viot process with generator $\mathbb{A}$. Since the process~%
$\hat{\mu}^N$ has the same distribution as the process $\hat{Z}_N$, we
also have $\hat{\mu}^N \Rightarrow h_{\mathrm{eq}} Z$ in $D_{\mathcal
{M}_1 (E \times[0,1] )}[0,\infty)$.
This proves part (B) of Theorem \ref{mainconvthm} with the process $\nu
$ being the same as the process $Z$.
\begin{remark}
\label{convergenceofinitialdistribution}
The approach of particle construction that we used to prove part (B) of
Theorem \ref{mainconvthm} can also be used to show that the
distributions of $\hat{\mu}^N(0) = \mu^N(\tau_N)$ converge along the
entire sequence. Since $\tau_N \to0$ a.s., we first do a random time
change $\gamma^N$ (which is a bijection from $[0,\infty)$ to $[0,\infty
)$ depending on the population size $n^N$) such that $\gamma^N(\tau
_N) \to\rho$ a.s., where $\rho$ is a positive random variable.
We then use the particle construction $\hat{X}^N(t) = (\hat
{X}^N_1(t),\hat{X}^N_2(t),\ldots)$ similar to the one used here,
except that the birth, death and immigration rates are altered
according to the random time change. Next we show that the process $\hat
{X}^N$ converges on the random time interval $[0,\gamma^N(\tau
_N)]$ as $N \to\infty$. With a bit more work it is possible to
conclude from this convergence that the distributions of $\mu^N(\tau
_N)$ converge as well.
\end{remark}

\subsection{\texorpdfstring{Proof of Theorem \protect\ref{theoremspread}}
{Proof of Theorem 2.7}}
The membrane molecules are doing speed~$D$ Brownian motion on the sphere of radius
$R$, which we call $E$. Suppose the sphere $E$ is embedded in $\mathbb
{R}^3$ with its center at the origin. Let $B=(B_1,B_2,B_3)^{T}$ be a
Brownian motion on $E$ with speed $D$ and let $W=(W_1,W_2,W_3)^{T}$ be
a standard Brownian motion in $\mathbb{R}^3$. Henceforth, let $\langle
\cdot,\cdot\rangle$ denote the standard inner product in $\mathbb{R}^3$
and let \mbox{$\| \cdot\|$} denote the corresponding Euclidean
norm. From Stroock \cite{St71} it follows that we can express $B$ as
the solution of It\^{o}'s equation
%
%
\begin{equation}
\label{itosbm0}
dB=\sqrt{D}\biggl(I-\frac{B B^{T}}{ R^2 }\biggr)\,dW-D \frac{B}{R^2}\,dt.
\end{equation}
From above, it is immediate that for any $t \geq0$,
%
%
\begin{equation}
\label{exb3}
E(B_i(t))= B_i(0) e^{-{2D}t/{R^2}  } \qquad\mbox{for } i=1,2,3.
\end{equation}

\begin{lemma}
\label{surfsp}
Let $B$ and $\bar{B}$ be two independent speed $D$ Brownian motions on
the sphere $E$. Then for any $t>0$,
\[
E\bigl( \| B(t)-\bar{B}(t) \|^2\bigr)= 2R^2\biggl( 1- \frac
{\langle B(0),\bar{B}(0)\rangle}{R^2} e^{-{2D}t/{R^2}  }\biggr).
\]
\end{lemma}
\begin{pf} This result is a consequence of the simple calculation below:
\begin{eqnarray*}
&&E \bigl(\| B(t)-\bar{B}(t) \|^2\bigr)\\
&&\qquad= E \bigl(
\bigl(B_1(t)-\bar{B}_1(t)\bigr)^2+\bigl(B_2(t)-\bar{B}_2(t)\bigr)^2
+\bigl(B_3(t)-\bar{B}_3(t)\bigr)^2
\bigr) \\
&&\qquad = E \bigl( B_1^2(t)+B_2^2(t)+B_3(t)^2+\bar{B}_1^2(t)+\bar
{B}_2^2(t)+\bar{B}_3(t)^2 \\
&&\qquad\quad\hspace*{36pt}{}-2 B_1(t)\bar{B}_1(t)-2 B_2(t)\bar{B}_2(t)-2 B_3(t)\bar{B}_3(t)
\bigr) \\
&&\qquad = 2R^2-2 E(B_1(t))E(\bar{B}_1(t))-2 E(B_2(t))E(\bar{B}_2(t))-2
E(B_3(t))E(\bar{B}_3(t)) \\
&&\qquad = 2R^2\biggl( 1- \frac{\langle B(0),\bar{B}(0)\rangle}{R^2}
e^{-
{2D}t/{R^2}  }\biggr) \qquad[\mbox{using (\ref{exb3})}].
\end{eqnarray*}
\upqed\end{pf}

Recall the definition of $S_p$ given by (\ref{spreadst}) and the
definition of the process~$X$. We assume that we are at stationarity
and, hence, we can also assume that $X$ is defined for all $t \in
(-\infty,\infty)$. At any fixed time $t$ the sequence
$X(t)=(X_1(t),X_2(t),\ldots)$ is exchangeable and its de Finetti
measure $Z(t)$ has the same distribution as $\nu(t)$. Thus, the
distribution of two molecules sampled from $\nu(t)$ is the same as the
distribution of the first $2$ levels $X_1(t)$ and~$X_2(t)$. For $i=1,2$
let $X_i(t)=(Y_i(t),C_i(t))$, where $Y_i(t) \in E$ and $C_i(t) \in
[0,1]$. From the calculation in Section \ref{sec2} we can write
%
%
\begin{equation}
\label{spread1}
S_p = E\bigl( \|Y_1(t)-Y_2(t)\|^2 | C_1(t)=C_2(t) \bigr).
\end{equation}
The process determined by the first two levels of $X$ evolves according
to the generator $A_2$ given by (\ref{limAm}) with $m=2$. From the
definition of $A_2$ it is clear that level $2$ \textit{looks down} to
level $1$ at rate $2k_{\mathrm{fb}} \alpha$, where $\alpha=
(1-h_{\mathrm{eq}})/h_{\mathrm{eq}}$. Moreover, at both the levels
there is an \textit{immigration} event at rate $k_{\mathrm{on}}\alpha$,
in which a molecule with a uniformly chosen type in $E \times[0,1]$ is
inserted at that level. In between these lookdowns and immigrations,
the molecules at levels $1$ and $2$ are diffusing on the membrane
according to independent speed $D$ Brownian motions.

The quantity $S_p$ can be calculated by tracing back the history from
time~$t$. Let $\tau_{12}$ be the last lookdown time between the first
two levels and $\tau_i$ be the last immigration time at level $i$ for
$i=1,2$. The random variables~$\tau_{12}$,~$\tau_1$ and $\tau_2$ are
independent and exponentially distributed with rates $2k_{\mathrm{fb}}
\alpha$, $k_{\mathrm{on}}\alpha$ and $k_{\mathrm{on}}\alpha$,
respectively. Let $\tau$ be the minimum of $\tau_1,\tau_2$ and $\tau
_{12}$ and so it is an exponential random variable with rate
$2(k_{\mathrm{on}}+k_{\mathrm{fb}})\alpha$.

The molecules at levels $1$ and $2$ will be in the same clan provided
$\tau=\tau_{12}$.
Molecules at levels $1$ and $2$ were at the same place at time $t-\tau$
and have been doing independent speed $D$ Brownian motions on the
sphere $E$ since then. Using Lemma \ref{surfsp}, we get
\begin{eqnarray*}
&&E \bigl( \| Y_1(t)-Y_2(t) \|^2 | C_1(t)=C_2(t) \bigr) \\
&&\qquad= 4R^2 (k_{\mathrm{on}}+k_{\mathrm{fb}})\alpha\int_{0}^{\infty}
( 1- e^{-{2D}s/{R^2}  }) e^{-2(k_{\mathrm{on}}+k_{\mathrm
{fb}})\alpha s}\,ds \\
&&\qquad = \frac{2 D}{( (k_{\mathrm{on}}+k_{\mathrm{fb}})\alpha+
{D}/{R^2} )}.
\end{eqnarray*}
This proves Theorem \ref{theoremspread}.

\section*{Acknowledgments}

I wish to sincerely thank my adviser, Professor Tho\-mas G. Kurtz, for
his constant support and guidance. A very special thanks to Professor
Sigurd Angenent for introducing me to this problem and asking many
interesting questions. I also wish to thank Professor Steve Altschuler
and Professor Lani Wu for inviting me to their lab at the University of
Texas, Southwestern and giving me the opportunity to better understand
the biological aspects of this problem.


%

%
\printaddresses


\begin{thebibliography}{45}

\bibitem{AAWW}
%
\begin{barticle}[author]
\bauthor{\bsnm{Altschuler},~\bfnm{Steven~J.}\binits{S.~J.}},
\bauthor{\bsnm{Angenent},~\bfnm{Sigurd~B}\binits{S.~B.}},
\bauthor{\bsnm{Wang},~\bfnm{Yanqin}\binits{Y.}} \AND
\bauthor{\bsnm{Wu},~\bfnm{Lani~F.}\binits{L.~F.}}
(\byear{2008}).
\btitle{On the spontaneous emergence of cell polarity}.
\bjournal{Nature}
\bvolume{454}
\bpages{886--889}.
\end{barticle}
%
\endbibitem

\bibitem{AN}
%
\begin{bbook}[author]
\bauthor{\bsnm{Athreya},~\bfnm{K.~B.}\binits{K.~B.}} \AND
\bauthor{\bsnm{Ney},~\bfnm{P.~E.}\binits{P.~E.}}
(\byear{1972}).
\btitle{Branching Processes}.
\bseries{Die Grundlehren der Mathematischen Wissenschaften}
\bvolume{196}.
\bpublisher{Springer}, \baddress{New York}.
\end{bbook}
\MR{0373040}
\endbibitem

\bibitem{AAWWref9}
%
\begin{barticle}[author]
\bauthor{\bsnm{Butty},~\bfnm{A.~C.}\binits{A.~C.}},
\bauthor{\bsnm{Perrinjaquet},~\bfnm{N.}\binits{N.}},
\bauthor{\bsnm{Petit},~\bfnm{A.}\binits{A.}},
\bauthor{\bsnm{Jaquenoud},~\bfnm{M.}\binits{M.}},
\bauthor{\bsnm{Segall},~\bfnm{J.~E.}\binits{J.~E.}},
\bauthor{\bsnm{Hofmann},~\bfnm{K.}\binits{K.}},
\bauthor{\bsnm{Zwahlen},~\bfnm{C.}\binits{C.}} \AND
\bauthor{\bsnm{Peter},~\bfnm{M.}\binits{M.}}
(\byear{2002}).
\btitle{A positive feedback loop stabilizes the guanine-nucleotide exchange
factor Cdc24 at sites of polarization}.
\bjournal{EMBO J.}
\bvolume{21}
\bpages{1565--1576}.
\end{barticle}
%
\endbibitem

\bibitem{Cove2}
%
\begin{barticle}[author]
\bauthor{\bsnm{Cove},~\bfnm{David~J.}\binits{D.~J.}}
(\byear{2000}).
\btitle{The generation and modification of cell polarity}.
\bjournal{Journal of Experimental Botany}
\bvolume{51}
\bpages{831--838}.
\end{barticle}
%
\endbibitem

\bibitem{Cove}
%
\begin{bmisc}[author]
\bauthor{\bsnm{Cove},~\bfnm{D.~J.}\binits{D.~J.}},
\bauthor{\bsnm{Hope},~\bfnm{I.~A.}\binits{I.~A.}} \AND
\bauthor{\bsnm{Quatrano},~\bfnm{R.~S.}\binits{R.~S.}}
(\byear{1999}).
\bhowpublished{Polarity in biological systems.
In \textit{Development}, \textit{Genetics}, \textit{Epigenetics
and Environmental Regulation}
(V. E. A. Russo, D.~J. Cove, L. G. Edgar, R. Jaenisch
and F. Salamini, eds.) 507--524.
Springer, Berlin.}
\end{bmisc}
%
\endbibitem

\bibitem{DawsonFeng}
%
\begin{barticle}[author]
\bauthor{\bsnm{Dawson},~\bfnm{Donald~A.}\binits{D.~A.}} \AND
\bauthor{\bsnm{Feng},~\bfnm{Shui}\binits{S.}}
(\byear{2006}).
\btitle{Asymptotic behavior of the {P}oisson--{D}irichlet distribution
for large
mutation rate}.
\bjournal{Ann. Appl. Probab.}
\bvolume{16}
\bpages{562--582}.
\end{barticle}
%
\MR{2244425}
\endbibitem

\bibitem{DH2}
%
\begin{barticle}[author]
\bauthor{\bsnm{Dawson},~\bfnm{Donald~A.}\binits{D.~A.}} \AND
\bauthor{\bsnm{Hochberg},~\bfnm{Kenneth~J.}\binits{K.~J.}}
(\byear{1982}).
\btitle{Wandering random measures in the {F}leming--{V}iot model}.
\bjournal{Ann. Probab.}
\bvolume{10}
\bpages{554--580}.
\end{barticle}
%
\MR{0659528}
\endbibitem

\bibitem{DK99}
%
\begin{barticle}[author]
\bauthor{\bsnm{Donnelly},~\bfnm{P.}\binits{P.}} \AND
\bauthor{\bsnm{Kurtz},~\bfnm{T.~G.}\binits{T.~G.}}
(\byear{1999}).
\btitle{Particle representations for measure-valued population models}.
\bjournal{Ann. Probab.}
\bvolume{27}
\bpages{166--205}.
\end{barticle}
%
\MR{1681126}
\endbibitem

\bibitem{DN}
%
\begin{barticle}[author]
\bauthor{\bsnm{Drubin},~\bfnm{D.~G.}\binits{D.~G.}} \AND
\bauthor{\bsnm{Nelson},~\bfnm{W.~J.}\binits{W.~J.}}
(\byear{1996}).
\btitle{Origins of cell polarity}.
\bjournal{Cell}
\bvolume{84}
\bpages{335--344}.
\end{barticle}
%
\endbibitem

\bibitem{AAWWref6}
%
\begin{barticle}[author]
\bauthor{\bsnm{Ebersbach},~\bfnm{G.}\binits{G.}} \AND
\bauthor{\bsnm{Jacobs-Wagner},~\bfnm{C.}\binits{C.}}
(\byear{2007}).
\btitle{Exploration into the spatial and temporal mechanisms of bacterial
polarity}.
\bjournal{Trends Microbiology}
\bvolume{15}
\bpages{101--108}.
\end{barticle}
%
\endbibitem

\bibitem{EK}
%
\begin{bbook}[author]
\bauthor{\bsnm{Ethier},~\bfnm{S.~N.}\binits{S.~N.}} \AND
\bauthor{\bsnm{Kurtz},~\bfnm{T.~G.}\binits{T.~G.}}
(\byear{1986}).
\btitle{Markov Processes: Characterization and Convergence}.
\bpublisher{Wiley}, \baddress{New York}.
\end{bbook}
%
\MR{0838085}
\endbibitem

\bibitem{EK93}
%
\begin{barticle}[author]
\bauthor{\bsnm{Ethier},~\bfnm{S.~N.}\binits{S.~N.}} \AND
\bauthor{\bsnm{Kurtz},~\bfnm{T.~G.}\binits{T.~G.}}
(\byear{1993}).
\btitle{Fleming--Viot processes in population genetics}.
\bjournal{SIAM J. Control Optim.}
\bvolume{31}
\bpages{345--386}.
\end{barticle}
%
\MR{1205982}
\endbibitem

\bibitem{Evans0}
%
\begin{barticle}[author]
\bauthor{\bsnm{Evans},~\bfnm{Janice~P.}\binits{J.~P.}},
\bauthor{\bsnm{Foster},~\bfnm{James~A.}\binits{J.~A.}},
\bauthor{\bsnm{McAvey},~\bfnm{Beth~A.}\binits{B.~A.}},
\bauthor{\bsnm{Gerton},~\bfnm{George~L.}\binits{G.~L.}},
\bauthor{\bsnm{Kopf},~\bfnm{Gregory~S.}\binits{G.~S.}} \AND
\bauthor{\bsnm{Schultz},~\bfnm{Richard~M.}\binits{R.~M.}}
(\byear{2000}).
\btitle{Effects of perturbation of cell polarity on molecular markers of
sperm--egg binding sites on mouse eggs}.
\bjournal{Biology of Reproduction}
\bvolume{62}
\bpages{76--84}.
\end{barticle}
%
\endbibitem

\bibitem{Ewens}
%
\begin{barticle}[author]
\bauthor{\bsnm{Ewens},~\bfnm{W.~J.}\binits{W.~J.}}
(\byear{1972}).
\btitle{The sampling theory of selectively neutral alleles}.
\bjournal{Theoret. Population Biology}
\bvolume{3}
\bpages{87--112}. 
\end{barticle}
%
\MR{0325177}
\endbibitem

\bibitem{FengShui2}
%
\begin{barticle}[author]
\bauthor{\bsnm{Feng},~\bfnm{Shui}\binits{S.}}
(\byear{2009}).
\btitle{Poisson--{D}irichlet distribution with small mutation rate}.
\bjournal{Stochastic Process. Appl.}
\bvolume{119}
\bpages{2082--2094}.
\end{barticle}
%
\MR{2519357}
\endbibitem

\bibitem{FengShui}
%
\begin{bbook}[author]
\bauthor{\bsnm{Feng},~\bfnm{Shui}\binits{S.}}
(\byear{2010}).
\btitle{The {P}oisson--{D}irichlet Distribution and Related Topics:
Models and Asymptotic Behaviors}.
\bpublisher{Springer}, \baddress{Heidelberg}.
\end{bbook}
%
\MR{2663265}
\endbibitem

\bibitem{FV}
%
\begin{barticle}[author]
\bauthor{\bsnm{Fleming},~\bfnm{W.~H.}\binits{W.~H.}} \AND
\bauthor{\bsnm{Viot},~\bfnm{M.}\binits{M.}}
(\byear{1979}).
\btitle{Some measure-valued Markov processes in population genetics theory}.
\bjournal{Indiana Univ. Math. J.}
\bvolume{28}
\bpages{817--843}.
\end{barticle}
%
\MR{0542340}
\endbibitem

\bibitem{AAWWref11}
%
\begin{barticle}[author]
\bauthor{\bsnm{Gassama-Diagne},~\bfnm{A.~et~al.}\binits{A.~e.~a.}}
(\byear{2006}).
\btitle{Phosphatidylinositol-3,4,5-trisphosphate regulates the
formation of the
basolateral plasma membrane in epithelial cells}.
\bjournal{Nature Cell Biol.}
\bvolume{8}
\bpages{963--970}.
\end{barticle}
%
\endbibitem

\bibitem{AAWWref3}
%
\begin{barticle}[author]
\bauthor{\bsnm{Gierer},~\bfnm{A.}\binits{A.}} \AND
\bauthor{\bsnm{Meinhardt},~\bfnm{H.}\binits{H.}}
(\byear{1972}).
\btitle{A theory of biological pattern formation}.
\bjournal{Kybernetik}
\bvolume{12}
\bpages{30--39}.
\end{barticle}
%
\endbibitem

\bibitem{Gr}
%
\begin{barticle}[author]
\bauthor{\bsnm{Griffiths},~\bfnm{R.~C.}\binits{R.~C.}}
(\byear{1979}).
\btitle{On the distribution of allele frequencies in a diffusion model}.
\bjournal{Theoret. Population Biol.}
\bvolume{15}
\bpages{140--158}.
\end{barticle}
%
\MR{0528914}
\endbibitem

\bibitem{AnkitThesis}
%
\begin{bmisc}[author]
\bauthor{\bsnm{Gupta},~\bfnm{Ankit}\binits{A.}}
(\byear{2010}).
\bhowpublished{Stochastic models for cell polarity.
Ph.D. thesis, Univ. Wisconsin, Madison.}
\end{bmisc}
%
\endbibitem

\bibitem{AAWWref14}
%
\begin{barticle}[author]
\bauthor{\bsnm{Irazoqui},~\bfnm{J.~E.}\binits{J.~E.}},
\bauthor{\bsnm{Gladfelter},~\bfnm{A.~S.}\binits{A.~S.}} \AND
\bauthor{\bsnm{Lew},~\bfnm{D.~J.}\binits{D.~J.}}
(\byear{2003}).
\btitle{Scaffold-mediated symmetry breaking by Cdc42p}.
\bjournal{Nature Cell Biol.}
\bvolume{5}
\bpages{1062--1070}.
\end{barticle}
%
\endbibitem

\bibitem{Metivier}
%
\begin{barticle}[author]
\bauthor{\bsnm{Joffe},~\bfnm{A.}\binits{A.}} \AND
\bauthor{\bsnm{M{\'e}tivier},~\bfnm{M.}\binits{M.}}
(\byear{1986}).
\btitle{Weak convergence of sequences of semimartingales with
applications to
multitype branching processes}.
\bjournal{Adv. in Appl. Probab.}
\bvolume{18}
\bpages{20--65}.
\end{barticle}
%
\MR{0827331}
\endbibitem

\bibitem{Bala}
%
\begin{bbook}[author]
\bauthor{\bsnm{Johnson},~\bfnm{Norman~L.}\binits{N.~L.}},
\bauthor{\bsnm{Kotz},~\bfnm{Samuel}\binits{S.}} \AND
\bauthor{\bsnm{Balakrishnan},~\bfnm{N.}\binits{N.}}
(\byear{1997}).
\btitle{Discrete Multivariate Distributions}.
\bpublisher{Wiley}, \baddress{New York}.
\end{bbook}
%
\MR{1429617}
\endbibitem

\bibitem{kallenberg}
%
\begin{bbook}[author]
\bauthor{\bsnm{Kallenberg},~\bfnm{Olav}\binits{O.}}
(\byear{2002}).
\btitle{Foundations of Modern Probability},
\bedition{2nd} ed.
\bpublisher{Springer}, \baddress{New York}.
\end{bbook}
%
\MR{1876169}
\endbibitem

\bibitem{kat}
%
\begin{barticle}[author]
\bauthor{\bsnm{Katzenberger},~\bfnm{G.~S.}\binits{G.~S.}}
(\byear{1991}).
\btitle{Solutions of a stochastic differential equation forced onto a manifold
by a large drift}.
\bjournal{Ann. Probab.}
\bvolume{19}
\bpages{1587--1628}.
\end{barticle}
%
\MR{1127717}
\endbibitem

\bibitem{KC64}
%
\begin{barticle}[author]
\bauthor{\bsnm{Kimura},~\bfnm{M.}\binits{M.}} \AND
\bauthor{\bsnm{Crow},~\bfnm{J.~F.}\binits{J.~F.}}
(\byear{1964}).
\btitle{The number of alleles that can be maintained in a finite population}.
\bjournal{Genetics}
\bvolume{49}
\bpages{725--738}.
\end{barticle}
%
\endbibitem

\bibitem{King75}
%
\begin{barticle}[author]
\bauthor{\bsnm{Kingman},~\bfnm{J.~F.~C.}\binits{J.~F.~C.}},
\bauthor{\bsnm{Taylor},~\bfnm{S.~J.}\binits{S.~J.}},
\bauthor{\bsnm{Hawkes},~\bfnm{A.~G.}\binits{A.~G.}},
\bauthor{\bsnm{Walker},~\bfnm{A.~M.}\binits{A.~M.}},
\bauthor{\bsnm{Cox},~\bfnm{D.~R.}\binits{D.~R.}},
\bauthor{\bsnm{Smith},~\bfnm{A.~F.~M.}\binits{A.~F.~M.}},
\bauthor{\bsnm{Hill},~\bfnm{B.~M.}\binits{B.~M.}},
\bauthor{\bsnm{Burville},~\bfnm{P.~J.}\binits{P.~J.}} \AND
\bauthor{\bsnm{Leonard},~\bfnm{T.}\binits{T.}}
(\byear{1975}).
\btitle{Random discrete distribution}.
\bjournal{J. Roy. Statist. Soc. Ser. B}
\bvolume{37}
\bpages{1--22}.
\end{barticle}
%
\MR{0368264}
\endbibitem

\bibitem{K98}
%
\begin{barticle}[author]
\bauthor{\bsnm{Kurtz},~\bfnm{Thomas~G.}\binits{T.~G.}}
(\byear{1998}).
\btitle{Martingale problems for conditional distributions of {M}arkov
processes}.
\bjournal{Electron. J. Probab.}
\bvolume{3}
\bpages{29 pp. (electronic)}.
\end{barticle}
%
\MR{1637085}
\endbibitem

\bibitem{AAWWref15}
%
\begin{barticle}[author]
\bauthor{\bsnm{Marco},~\bfnm{E.}\binits{E.}},
\bauthor{\bsnm{Wedlich-Soldner},~\bfnm{R.}\binits{R.}},
\bauthor{\bsnm{Li},~\bfnm{R.}\binits{R.}},
\bauthor{\bsnm{Altschuler},~\bfnm{S.~J.}\binits{S.~J.}} \AND
\bauthor{\bsnm{Wu},~\bfnm{L.~F.}\binits{L.~F.}}
(\byear{2007}).
\btitle{Endocytosis optimizes the dynamic localization of membrane proteins
that regulate cortical polarity}.
\bjournal{Cell}
\bvolume{129}
\bpages{411--422}.
\end{barticle}
%
\endbibitem

\bibitem{AAWWref16}
%
\begin{barticle}[author]
\bauthor{\bsnm{Ozbudak},~\bfnm{E.~M.}\binits{E.~M.}},
\bauthor{\bsnm{Becskei},~\bfnm{A.}\binits{A.}} \AND\bauthor{\bparticle{van}
\bsnm{Oudenaarden},~\bfnm{A.}\binits{A.}}
(\byear{2005}).
\btitle{A system of counteracting feedback loops regulates Cdc42p activity
during spontaneous cell polarization}.
\bjournal{Dev. Cell}
\bvolume{9}
\bpages{565--571}.
\end{barticle}
%
\endbibitem

\bibitem{Seydoux}
%
\begin{barticle}[author]
\bauthor{\bsnm{Pellettieri},~\bfnm{Jason}\binits{J.}} \AND
\bauthor{\bsnm{Seydoux},~\bfnm{Geraldine}\binits{G.}}
(\byear{2002}).
\btitle{Anterior--posterior polarity in C. elegans and
drosophila--PARallels and
differences}.
\bjournal{Science}
\bvolume{298}
\bpages{1946--1950}.
\end{barticle}
%
\endbibitem

\bibitem{Pitman}
%
\begin{barticle}[author]
\bauthor{\bsnm{Pitman},~\bfnm{Jim}\binits{J.}} \AND
\bauthor{\bsnm{Yor},~\bfnm{Marc}\binits{M.}}
(\byear{1997}).
\btitle{The two-parameter {P}oisson--{D}irichlet distribution derived
from a
stable subordinator}.
\bjournal{Ann. Probab.}
\bvolume{25}
\bpages{855--900}.
\end{barticle}
%
\MR{1434129}
\endbibitem

\bibitem{BioPB2}
%
\begin{barticle}[author]
\bauthor{\bsnm{Pruyne},~\bfnm{D.}\binits{D.}} \AND
\bauthor{\bsnm{Bretscher},~\bfnm{A.}\binits{A.}}
(\byear{2000}).
\btitle{Polarization of cell growth in yeast}.
\bjournal{Journal of Cell Science}
\bvolume{113}
\bpages{571--585}.
\end{barticle}
%
\endbibitem

\bibitem{BioPB1}
%
\begin{barticle}[author]
\bauthor{\bsnm{Pruyne},~\bfnm{D.}\binits{D.}} \AND
\bauthor{\bsnm{Bretscher},~\bfnm{A.}\binits{A.}}
(\byear{2000}).
\btitle{Polarization of cell growth in yeast I. Establishment and maintenance
of polarity states}.
\bjournal{Journal of Cell Science}
\bvolume{113}
\bpages{365--375}.
\end{barticle}
%
\endbibitem

\bibitem{Schmuland}
%
\begin{barticle}[author]
\bauthor{\bsnm{Schmuland},~\bfnm{B.}\binits{B.}}
(\byear{1991}).
\btitle{A result on the infinitely many neutral alleles diffusion model}.
\bjournal{J.~Appl. Probab.}
\bvolume{28}
\bpages{253--267}.
\end{barticle}
%
\MR{1104564}
\endbibitem

\bibitem{Shapiro}
%
\begin{barticle}[author]
\bauthor{\bsnm{Shapiro},~\bfnm{Lucy}\binits{L.}},
\bauthor{\bsnm{McAdams},~\bfnm{Harley~H.}\binits{H.~H.}} \AND
\bauthor{\bsnm{Losick},~\bfnm{Richard}\binits{R.}}
(\byear{2002}).
\btitle{Generating and exploiting polarity in bacteria}.
\bjournal{Science}
\bvolume{298}
\bpages{1942--1946}.
\end{barticle}
%
\endbibitem

\bibitem{AAWWref10}
%
\begin{barticle}[author]
\bauthor{\bsnm{Shi},~\bfnm{S.~H.}\binits{S.~H.}},
\bauthor{\bsnm{Jan},~\bfnm{L.~Y.}\binits{L.~Y.}} \AND
\bauthor{\bsnm{Jan},~\bfnm{Y.~N.}\binits{Y.~N.}}
(\byear{2003}).
\btitle{Hippocampal neuronal polarity specified by spatially localized
mPar3/mPar6 and PI 3-kinase activity}.
\bjournal{Cell}
\bvolume{112}
\bpages{63--75}.
\end{barticle}
%
\endbibitem

\bibitem{St71}
%
\begin{barticle}[author]
\bauthor{\bsnm{Stroock},~\bfnm{Daniel~W.}\binits{D.~W.}}
(\byear{1971}).
\btitle{On the growth of stochastic integrals}.
\bjournal{Z. Wahrsch. Verw. Gebiete}
\bvolume{18}
\bpages{340--344}.
\end{barticle}
%
\MR{0287622}
\endbibitem

\bibitem{Takaku}
%
\begin{barticle}[author]
\bauthor{\bsnm{Takaku},~\bfnm{Tomoyuki}\binits{T.}},
\bauthor{\bsnm{Ogura},~\bfnm{Kenji}\binits{K.}},
\bauthor{\bsnm{Kumeta},~\bfnm{Hiroyuki}\binits{H.}},
\bauthor{\bsnm{Yoshida},~\bfnm{Naoki}\binits{N.}} \AND
\bauthor{\bsnm{Inagaki},~\bfnm{Fuyuhiko}\binits{F.}}
(\byear{2010}).
\btitle{Solution structure of a novel Cdc42 binding module of Bem1 and its
interaction with Ste20 and Cdc42}.
\bjournal{Journal of Biological Chemistry}
\bvolume{285}
\bpages{19346--19353}.
\end{barticle}
%
\endbibitem

\bibitem{AAWWref4}
%
\begin{barticle}[author]
\bauthor{\bsnm{Turing},~\bfnm{A.~M.}\binits{A.~M.}}
(\byear{1952}).
\btitle{The chemical basis of morphogenesis}.
\bjournal{Phil. Trans. R. Soc. B}
\bvolume{237}
\bpages{37--72}.
\end{barticle}
%
\endbibitem

\bibitem{Wat}
%
\begin{barticle}[author]
\bauthor{\bsnm{Watterson},~\bfnm{G.~A.}\binits{G.~A.}}
(\byear{1976}).
\btitle{The stationary distribution of the infinitely-many neutral alleles
diffusion model}.
\bjournal{J. Appl. Probab.}
\bvolume{13}
\bpages{639--651}.
\end{barticle}
%
\MR{0504014}
\endbibitem

\bibitem{AAWWref8}
%
\begin{barticle}[author]
\bauthor{\bsnm{Wedlich-Soldner},~\bfnm{R.}\binits{R.}},
\bauthor{\bsnm{Wai},~\bfnm{S.~C.}\binits{S.~C.}},
\bauthor{\bsnm{Schmidt},~\bfnm{T.}\binits{T.}} \AND
\bauthor{\bsnm{Li},~\bfnm{R.}\binits{R.}}
(\byear{2004}).
\btitle{Robust cell polarity is a dynamic state established by coupling
transport and GTPase signaling}.
\bjournal{J. Cell Biol.}
\bvolume{166}
\bpages{889--900}.
\end{barticle}
%
\endbibitem

\bibitem{AAWWref12}
%
\begin{barticle}[author]
\bauthor{\bsnm{Weiner},~\bfnm{O.~D. et~al.}\binits{O.~D. e.~a.}}
(\byear{2002}).
\btitle{A PtdInsP(3)- and Rho GTPase-mediated positive feedback loop regulates
neutrophil polarity}.
\bjournal{Nature Cell Biol.}
\bvolume{4}
\bpages{509--513}.
\end{barticle}
%
\endbibitem

\end{thebibliography}
\end{document}